\newcommand{\R}{\ensuremath{\mathbb{R}}}
\newcommand{\Z}{\ensuremath{\mathbb{Z}}}
\newcommand{\Na}{\ensuremath{\mathbb{N}}}
\newcommand{\K}{\ensuremath{\mathbb{K}}}
\newcommand{\T}{\ensuremath{\mathbb{T}}}
\newtheorem{teo}{Theorem}[section] 
\newtheorem{conjectura}{Conjecture}[section] 
\newtheorem{defi}[teo]{Definition} 
\newtheorem{obs}[teo]{Remark} 
\newtheorem{prop}[teo]{Proposition}
\DeclareMathOperator{\sen}{sen}
\DeclareMathOperator{\coin}{Coin} 
\DeclareMathOperator{\bucoin}{BUCoin}
\DeclareMathOperator{\NBU}{NBU}
\DeclareMathOperator{\ind}{ind}
\begin{document}

\title{Nielsen-Borsuk-Ulam number for maps between tori}

\author[Givanildo]{Givanildo Donizeti de Melo}


\email{givadonimelo@hotmail.com}

\thanks{The first author was supported by CAPES - Brazil, the second author was partially supported by FAPESP, Projeto Tem\'atico: Topologia Alg\'ebrica, Geom\'etrica e Diferencial, 2016/24707-4}

\author{Daniel Vendr\'uscolo}
\address{Universidade Federal de S\~ao Carlos\\
S\~ao Carlos - SP \\
Brazil}
\email{daniel@dm.ufscar.br}
\subjclass{Primary 55M20}

\keywords{Borsuk-Ulam Theorem, Nielsen-Borsuk-Ulam number, Involutions, Torus}

\date{January 1, 2004}


\begin{abstract}
We compute the Nielsen-Borsuk-Ulam number for any selfmap of $n-$torus, $\T^n$, as well as any free involution $\tau$ in $\T^n$, with $n \leqslant 3$. Finally, we conclude that the tori, $\T^1$, $\T^2$ and $\T^3$, are Wecken spaces in Nielsen-Borsuk-Ulam theory.  Such a number is a lower bound for the minimal number of pair of points such that $f(x)=f(\tau(x))$ in a given homotopy class of maps.
\end{abstract}

\maketitle

\section{Introduction}

In the literature one can find many different generalizations of the classical Borsuk-Ulam Theorem for maps from the sphere $S^n$ in the Euclidean space $\R^n$. For a good historical review, along with several other aspects, we refer to \cite{UsingBorsukUlam}.

One possible generalization can be the following: given two topological spaces $X$ and $Y$ and a free involution $\tau$ on $X$ we can ask if the triple $(X,\tau;Y)$ has the Borsuk-Ulam Property, i. e., if for any continuous map $f:X\to Y$ there exists a point $x\in X$ such that $f(x)=f(\tau(x))$.

In this context, the classical Theorem just states that $(S^n, antipodal\\ map, \R^n)$ has the Borsuk-Ulam Property. In \cite{dacibergsurfaces} this approach was used to study Borsuk-Ulam Property for surfaces with maps on $\R^2$, and it indicated that the answer may depend on the involution, i. e., the same surface can have this property in respect to an involution $\tau_1$, but not for another involution $\tau_2$.

Accordingly, \cite{daciberg} discusses the triples $(X,\tau,S)$ with Borsuk-Ulam Property where $X$ is a $CW$-complex, $\tau$ is a cellular involution and $S$ is a connected closed surface, using braids group of surfaces.

More recently (in \cite{vinicius}) the Borsuk-Ulam Property was stated not for a triple $(X,\tau,Y)$ but for each homotopy class of selfmaps of surfaces with Euler characteristic zero. It must be noted that for maps on $\R^n$ there is only one such class, however, this is not generally the case.

From this perspective, while investigating for which homotopy class of maps $f:X\to Y$ it is true that for any $f'$ in such class, there exists a point $x\in X$ such that $f'(x)=f'(\tau(x))$, the studies \cite{daniel, daniel2} have taken on a different approach. Using ideas from Nielsen fixed point theory, Nilsen Borsuk-Ulam classes and a Nilsen Borsuk-Ulam number were defined, for a homotopy class of maps between  triangulated, orientable, closed manifolds. Such invariant is a lower bound, in the homotopy class, for the number of pairs of points satisfying  $f(x)=f(\tau(x))$. In the present study we compute the Nielsen-Borsuk-Ulam number for selfmaps of tori until dimension $3$. 

We will define the $n-$torus, denoted by $\T^n$, as the quotient space ${\R^n}/{\Z^n}$ and we remember that $\pi_1(\T^n)=\Z^n$, and that $n-$torus is a  $K(\pi,1)$ space. So  the set of homotopy classes of selfmaps of the $n-$torus, $[\T^n,\T^n]$, is in bijection with the set of homomorphisms, $\hom(\Z^n,\Z^n)$, and this set can be regarded as the $n\times n$-integer matrices. Occasionally, there will be a slight notation abuse between an equivalence class and some of their representatives. We will always see tori as abelian groups, therefore, we will use additive notation for the group operation.

Interestingly, the results presented here show that, for $n\leqslant 3$, the triples $(\T^n, \tau,\T^n)$ do not have the Borsuk-Ulam Property, for any involution $\tau$, and also, that tori are Wecken spaces in Nielsen-Borsuk-Ulam theory. In each case, we present maps that realize Nielsen-Borsuk-Ulam number. 

In the proofs of Theorems \ref{NBUtoroT1}, \ref{NBUtoroT2} and \ref{NBUtoroT3} a similar reasoning was adopted, given a map $f$ that represents a homotopy class, we show that there is a small perturbation of $f$ in this same class, usually called $f'$, such that $f'$ realizes the Nielsen-Borsuk-Ulam number. 

This study consists of 5 sections, this introduction included. Section $2$ gives a brief overview of Nielsen-Borsuk-Ulam theory (following \cite{daniel, daniel2}), Sections $3$, $4$ and $5$ deal with the computation of the Nielsen-Borsuk-Ulam number for selfmaps of tori in dimension $1$, $2$ and $3$, respectively, and Section $6$ presents some considerations for tori of greater dimension.

\section{Nielsen-Borsuk-Ulam theory}

For practical reasons, we will reproduce in this section some definitions and propositions from \cite{daniel} and \cite{daniel2}.

Denoting by $Coin(f,f\circ\tau)$ the coincidence set of the pair $(f,f\circ\tau)$, \cite[Theorem~2.1]{daniel} shows, in the context of simplicial complexes, that we can suppose $Coin(f,f\circ\tau)$ finite. Moreover \cite[Theorem~3.5]{daniel} shows that if two homotopic maps, $f$ and $g$, are such that $Coin(f,f\circ\tau)$ and $Coin(g,g\circ\tau)$ are both finite, then there exists a homotopy between them with such set finite in each level.

\begin{defi}\cite[2.1]{daniel2} Let $(X,\tau;Y)$ be a triple where $X$ and $Y$ are finite $n$-dimensional complexes, $\tau$ is a free simplicial involution on $X$ for any map $ f:X\to Y$ with $Coin(f, f\circ\tau)= \{x_1, \tau(x_1), \ldots, x_m, \tau(x_m)\}$, we define the Borsuk-Ulam coincidence set for the pair $(f,\tau)$, as the set of pairs:
$$BUCoin(f;\tau)=\{(x_1,\tau(x_1));\ldots;(x_m,\tau(x_m))\}$$
and we say that two pairs $(x_i,\tau(x_i)), (x_j,\tau(x_j))$ are in the same Borsuk-Ulam coincidence class if there exists a path $\gamma$ from a point in $\{x_i,\tau(x_i)\}$ to a point in $\{x_j,\tau(x_j)\}$ such that $f\circ \gamma$ is homotopic to $f\circ\tau\circ\gamma$ with fixed endpoints.
\end{defi}

\begin{defi}\cite[2.2]{daniel2} A Borsuk-Ulam coincidence class $C$ is called {\it single} if for one (or any) pair $(x,\tau(x))\in C$ there exists a path $\gamma$ from $x$ to $\tau(x)$ such that $f\circ \gamma$ is homotopic to $f\circ\tau\circ\gamma$ with fixed endpoints.
\end{defi}

\begin{prop}\cite[2.4]{daniel2} A Borsuk-Ulam coincidence class $C$ is single if, and only if, it is composed of just one usual coincidence class of the pair $(f,f\circ\tau)$. Moreover, if $C$ is a finite Borsuk-Ulam coincidence class of the pair $(f,\tau)$ which is not single (called {\it double}), then we can change the labels of the elements of $C$ in a way that:
\begin{itemize}
\item $C=\{(x_1,\tau(x_1)),\dots,(x_k,\tau(x_k))\}$;
\item $C=C_1\cup C_2$ where $C_1$ and $C_2$ are usual coincidence classes of the pair $(f,f\circ\tau)$;
\item $C_1=\{x_1,\dots,x_k\}$ and $C_2=\{\tau(x_1),\dots,\tau(x_k)\}$. 
\end{itemize}
\end{prop}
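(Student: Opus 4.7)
The plan is to set up a single ``transport lemma'' for paths with the homotopy property and then derive both assertions of the proposition as straightforward consequences, with $\tau$ playing its standard role.

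\textbf{Transport lemma.} The property ``$f\circ\gamma\simeq f\circ\tau\circ\gamma$ rel endpoints'' is stable under reversal, under concatenation of compatible paths, and under postcomposition with $\tau$ (this last because $\tau^2=\mathrm{id}$ converts $f\circ\tau\circ(\tau\circ\gamma)$ back into $f\circ\gamma$). Combining these observations, if $\alpha$ is a path from $x$ to $\tau(x)$ with the property and $\gamma$ is a path from $x$ to $y$ with the property, then $\overline{\gamma}\cdot\alpha\cdot(\tau\circ\gamma)$ is a path from $y$ to $\tau(y)$ with the property. This one construction drives the whole proof.

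\textbf{The equivalence.} The backward implication is immediate: if the coincidence points making up $C$ lie in a single usual coincidence class of $(f,f\circ\tau)$, then for any pair $(x,\tau(x))\in C$ there is, by definition, a path from $x$ to $\tau(x)$ with the property, so $C$ is single. For the forward implication, assume $C$ is single, witnessed by some path $\alpha$ for the pair $(x_1,\tau(x_1))$. Given any other pair $(x_j,\tau(x_j))\in C$, the definition of BU-equivalence produces a path $\gamma$ from one of $\{x_1,\tau(x_1)\}$ to one of $\{x_j,\tau(x_j)\}$ with the property; postcomposing with $\tau$ if necessary, we may arrange $\gamma$ to start at $x_1$, and the transport lemma then yields a path from $x_j$ to $\tau(x_j)$ with the property. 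Hence the four points $x_1,\tau(x_1),x_j,\tau(x_j)$ all lie in one usual coincidence class, and since $j$ was arbitrary, $C$ is a single usual class.

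\textbf{Structure of a double class.} Suppose $C$ is not single. By the equivalence just proved, for every pair $(x,\tau(x))\in C$ the points $x$ and $\tau(x)$ lie in distinct usual coincidence classes (otherwise $C$ would be single). Since $\tau$ permutes usual coincidence classes (again by the transport lemma, applied to individual connecting paths), and no class meeting $C$ can be fixed by $\tau$, such classes come in $\tau$-swapped pairs $\{A,\tau(A)\}$. Fix $(x_1,\tau(x_1))\in C$ with $x_1\in A$; for any other pair $(x_j,\tau(x_j))\in C$, BU-equivalence produces a path with the property between endpoints of the two pairs, forcing $\{x_j,\tau(x_j)\}\subset A\cup\tau(A)$. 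Labeling $A=\{x_1,\dots,x_k\}$ automatically makes $\tau(A)=\{\tau(x_1),\dots,\tau(x_k)\}$ and $C=\{(x_i,\tau(x_i)):1\leqslant i\leqslant k\}$, which is the required decomposition $C=C_1\cup C_2$.

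\textbf{Main obstacle.} The one subtlety worth highlighting is that BU-equivalence is defined by a \emph{single} connecting path rather than a chain, yet the argument needs chains (composed through several intermediate pairs) to conclude ``exactly two usual classes''. The transport lemma again supplies the missing transitivity: two paths with the property whose endpoints fail to align can always be spliced after postcomposing one of them with $\tau$. Once this is accepted, the remainder of the proof is elementary bookkeeping on where the points of each pair in $C$ sit among the usual coincidence classes.
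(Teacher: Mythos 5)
Your proof is correct: the ``transport lemma'' is precisely the observation that the relation $f\circ\gamma\simeq f\circ\tau\circ\gamma$ rel endpoints is the usual Nielsen coincidence relation for the pair $(f,f\circ\tau)$, that it is symmetric and transitive, and that $\tau$ acts on its classes because $\tau^2=\mathrm{id}$; from this both the characterization of single classes and the $C=C_1\cup C_2$ decomposition of a double class follow exactly as you describe, and you correctly flag the one real subtlety (that BU-equivalence as defined by a single path must be checked to be transitive). This proposition is quoted in the paper from \cite{daniel2} without a reproduced proof, and your argument is the standard one given there, so there is nothing further to compare.
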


At the begining of page 615 of \cite{daniel2} there is an observation about the relation between the local indices $ind(f,f\circ\tau;c)$ and $ind(f,f\circ\tau;\tau(c))$ in orientable manifolds. However, in this observation the effect of dimension was neglected. In fact, we have:
$$
ind(f,f\circ\tau;c)= \left\{
\begin{array}{rcl}
(-1)^n ind(f,f\circ\tau;\tau(c))& \mbox{if} & \tau\mbox{ preserves orientation,}\\
(-1)^{n-1} ind(f,f\circ\tau;\tau(c))& \mbox{if} & \tau\mbox{ reverses orientation.}
\end{array}
\right.
$$
where $ind(f,f\circ\tau;c)$ is the usual local index for coincidence and $n$ is the dimension of the manifold.

This change does not affect the results in \cite{daniel2}, however, it will be necessary to reorganize the definition of {\it pseudo-index} used there.

\begin{defi}\cite[Compare with Definition~2.5]{daniel2}
Let $X$ and $Y$ be closed orientable triangulable $n$-manifolds, $\tau$ a free involution on $X$ and $f:X\to Y$ a continuous map such that $BUCoin(f,\tau)$ is finite. If $C=\{(x_1,\tau(x_1)),\dots,\\ (x_k,\tau(x_k))\}$ is a Borsuk-Ulam coincidence class of the pair $(f,\tau)$, we define the pseudo-index of $C$, denoted $|ind|(f,\tau;C)$,  by:

\begin{small}

$$ \left\{
\begin{array}{cl}
\displaystyle\sum_{x_i \in C} ind(f,f\circ\tau;x_i)\ mod\ 2 & \mbox{ (if } C \mbox{ is single) and }\\
 & \mbox{(} \tau \mbox{ reverses orientation and } n \mbox{ is even;} \\
 & \mbox{  or } \\
 &  \tau \mbox{ preserves orientation and } n \mbox{ is odd.)} \\
 & \\
\dfrac{ind(f,f\circ\tau;C)}{2} & \mbox{ (if } C \mbox{ is single) and }\\
 & \mbox{(} \tau \mbox{ preserves orientation and } n \mbox{ is even;} \\
 & \mbox{  or } \\
 &  \tau \mbox{ reverses orientation and } n \mbox{ is odd.)} \\
 & \\
|ind(f,f\circ\tau;C_1)| & \mbox{(if } C \mbox{ is double, } C=C_1\cup C_2  \mbox{) and }\\
& \mbox{(}  \tau \mbox{ reverses orientation and } n  \mbox{ is even;} \\
& \mbox{  or } \\
&  \tau \mbox{ preserves orientation and } n \mbox{ is odd.)} \\
& \\
ind(f,f\circ\tau;C_1) & \mbox{(if } C \mbox{ is double, } C=C_1\cup C_2  \mbox{) and }\\
& \mbox{(}  \tau \mbox{ preserves orientation and } n  \mbox{ is even;} \\
& \mbox{  or } \\
&  \tau \mbox{ reverses orientation and } n \mbox{ is odd.)} \\
\end{array}
\right.
$$
\end{small}
where $C_1$ and $C_2$ are disjoint usual coincidence classes of the pair $(f,f\circ\tau)$.

\end{defi}

We call a Borsuk-Ulam coincidence class $C$ essential if  $|ind|(f,\tau;C)\neq 0$ and we define $NBU(f,\tau)$, the Nielsen-Borsuk-Ulam number of the pair $(f,\tau)$, as the number of essential Borsuk-Ulam coincidences classes. The definitions above are exactly what we need in order to prove:

\begin{prop}\cite[2.7]{daniel2} If $f'$ is a map homotopic to $f$ then $f'$ has at least $NBU(f,\tau)$ pairs of Borsuk-Ulam coincidence points.
\end{prop}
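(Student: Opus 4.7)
The plan is to establish homotopy invariance of essential Borsuk-Ulam coincidence classes via a Nielsen-style correspondence induced by any homotopy, and then to count.

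First I would set up a generic homotopy. Using \cite[Theorem~3.5]{daniel}, after a preliminary homotopy I may assume $\coin(f,f\circ\tau)$ and $\coin(f',f'\circ\tau)$ are finite and that there is a homotopy $H:X\times I\to Y$ from $f$ to $f'$ with $\coin(H_t,H_t\circ\tau)$ finite for every $t\in I$. Then $\bucoin(H_t,\tau)$ is finite throughout the homotopy.

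Next I would define an $H$-Nielsen relation between BU-coincidence classes: a class $C$ of $(f,\tau)$ is $H$-related to a class $C'$ of $(f',\tau)$ if there exist a pair $(x,\tau(x))\in C$, a pair $(x',\tau(x'))\in C'$, and a path $\alpha$ in $X$ from $x$ (or $\tau(x)$) to $x'$ (or $\tau(x')$) such that $s\mapsto H(\alpha(s),s)$ and $s\mapsto H(\tau(\alpha(s)),s)$ are homotopic rel endpoints in $Y$. Using the defining condition of a BU-class, I would check that this relation is independent of the chosen representatives, and that each class of $(f,\tau)$ is $H$-related to at most one class of $(f',\tau)$.

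The crucial step is invariance of the pseudo-index: if $C$ and $C'$ are $H$-related, then $|\ind|(f,\tau;C)=|\ind|(f',\tau;C')$. This reduces to standard homotopy invariance of the local coincidence index $\ind(f,f\circ\tau;-)$ summed over a coincidence class, combined with two facts that must be verified along the homotopy: (i) the single/double dichotomy of Proposition~\cite[2.4]{daniel2} is preserved, i.e.\ a single class cannot merge with another class to become double and vice versa, because $\tau$ is fixed and acts freely so the symmetric involution structure on $\bucoin$ is rigid; and (ii) the orientation/dimension parity case that governs which of the four branches of $|\ind|$ applies is determined by $\tau$ and $n$ alone, hence is the same for $f$ and $f'$. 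Going branch by branch through the definition, the corresponding local-index transformation formula displayed above for $\ind(f,f\circ\tau;c)$ versus $\ind(f,f\circ\tau;\tau(c))$ then ensures that halving, absolute value, or reduction mod $2$ is preserved.

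Finally, I would conclude as in classical Nielsen theory: if $C$ is an essential class of $(f,\tau)$ then pseudo-index preservation forces $C$ to be $H$-related to some (nonempty, essential) class $C'$ of $(f',\tau)$, since an empty class has zero pseudo-index; distinct essential classes go to distinct classes by uniqueness in Step~2; hence $f'$ has at least $NBU(f,\tau)$ essential BU-classes, and each contains at least one pair of Borsuk-Ulam coincidence points. The main obstacle I expect is Step~3, particularly checking that along the homotopy a double class cannot fuse into a single class without forcing its pseudo-index to vanish; this is where the free, fixed action of $\tau$ together with the orientation-correction factor $(-1)^n$ or $(-1)^{n-1}$ must be used carefully in each of the four cases.
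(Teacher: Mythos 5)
The paper itself does not prove this proposition --- it is quoted from \cite{daniel2} --- but your overall strategy (a homotopy with finite level-wise coincidence sets, an $H$-relation between Borsuk--Ulam classes, invariance of the pseudo-index, then a count) is the one used in that source. The genuine gap is in your Step~3, and it sits exactly at the branch of the pseudo-index that does the real work. You assert that invariance ``reduces to standard homotopy invariance of the local coincidence index summed over a coincidence class.'' That is fine for three of the four branches: when $\ind(f,f\circ\tau;c)=+\ind(f,f\circ\tau;\tau(c))$ the index of a single class is twice the sum over representatives, and for a double class $C=C_1\cup C_2$ the quantity is the ordinary index of the usual class $C_1$; ordinary Nielsen coincidence index invariance then applies. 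But in the first branch ($C$ single, with $\tau$ reversing orientation and $n$ even, or preserving and $n$ odd) the displayed relation gives $\ind(f,f\circ\tau;c)=-\ind(f,f\circ\tau;\tau(c))$, so the total index of the usual coincidence class underlying $C$ is identically zero; the pseudo-index is instead $\sum_i\ind(f,f\circ\tau;x_i)\bmod 2$ over one representative per pair, which is \emph{not} the class index and whose invariance does not follow from ordinary index theory.

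Concretely, $\coin(H_t,H_t\circ\tau)$ is automatically $\tau$-invariant, so birth--death events occur in $\tau$-orbits, and the $\tau$-orbit of a cancelling pair $\{c,c'\}$ can consist of a single Borsuk--Ulam pair $\{c,\tau(c)\}$ --- which is index-compatible precisely in this branch, since $\ind(c)=-\ind(\tau(c))$. Such an event would change your representative count mod $2$ by one, so you must show it cannot occur or is always compensated. The obstruction is the coupling of the two maps: a homotopy of $f$ near $c$ forces a simultaneous change of $f\circ\tau$ near $\tau(c)$, so the pair cannot be removed by the usual Wecken cancellation; making this precise requires an equivariant (semi-index type) argument, which is the actual content of \cite{daniel2}. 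Note this is exactly the branch invoked in the present paper's computations for $\T^1$ and for $(\T^2,\tau_2)$, so it cannot be elided. A smaller point: your claim that singleness is preserved under the $H$-relation should be proved by concatenating the relating path $\alpha$, the witness path $\gamma$, and $\tau\circ\alpha$, rather than by appeal to ``rigidity''; that part is routine once written out.
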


\section{Nielsen-Borsuk-Ulam number in $\T^1$}\label{capt1}

Our objective here is to use the classification of free involutions and compute, using good representatives, the Nielsen-Borsuk-Ulam number for each homotopy class of maps. 

We know that the $1$-torus, $\T^1$, is homeomorphic to $S^1$, with the only free involution in $S^1$ being the antipodal:
\[\tau(x)=x+\frac{1}{2}.\]

Let $f:S^1 \to S^1$ be a map, up to homotopy we can consider $f(x)=ax$ with $a \in \Z$. If $a$ is an odd number, we have $\coin(f,f\circ \tau)=\emptyset$, implying that the Nielsen-Borsuk-Ulam number is zero. Now, if $a$ is an even number then $\coin(f,f\circ \tau)=S^1$. 

We will use a small perturbation of $f$, in order to do so, we define $\epsilon:\T^1 \to \T^1$ given by $\epsilon(t)=\frac{1}{3}\sen(2\pi t)$ and  $f'(x)=ax+\epsilon(x)$. Then  $\coin(f',f'\circ \tau)=\{0,\frac{1}{2}\}$ and $\bucoin(f',\tau)=\{(0,\frac{1}{2})\}$, both coincidence points in the same coincidence class, so we have a unique single Borsuk-Ulam coincidence class with pseudo-index $|\ind|(f',f'\circ \tau;C)=1$. Therefore, $\NBU(f,\tau)=1$. By doing this, we have just proved the following result.

\begin{teo}\label{NBUtoroT1}
Let $f:S^1 \to S^1$ be a map, with $f_{\#}=\begin{pmatrix}a\end{pmatrix}$ and $a \in \Z$. Then 
\[NBU(f,\tau)=\left\{
\begin{array}{cl}
1 & \text{if a is even};\\
0 & \text{if a is odd},
\end{array}
\right.\]
where $f_{\#}$ is the homomorphism induced from $f$ in the fundamental group. Moreover, for each map $f:S^1 \to S^1$ there exists $f'$ homotopic to $f$ that realizes the $\NBU(f,\tau)$, i.e., $S^1$ is a Wecken type space in Nielsen-Borsuk-Ulam theory.
\end{teo}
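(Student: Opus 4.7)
The plan is to exploit that $\T^1=S^1$ is a $K(\Z,1)$, so each homotopy class of selfmap contains a unique linear representative $f(x)=ax$ with $a\in\Z$, and to split the argument by the parity of $a$. If $a$ is odd, the coincidence equation $f(x)=f(\tau(x))$ becomes $ax\equiv ax+\tfrac{a}{2}\pmod 1$, which has no solution, so $\coin(f,f\circ\tau)=\emptyset$ and trivially $\NBU(f,\tau)=0$ with $f$ itself realising it.

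If $a$ is even, the linear representative has $\coin(f,f\circ\tau)=S^1$, which is not finite, so I would replace $f$ by the small perturbation $f'(x)=ax+\tfrac13\sen(2\pi x)$. Using $\tfrac13\sen(2\pi(x+\tfrac12))=-\tfrac13\sen(2\pi x)$ and $\tfrac{a}{2}\equiv 0\pmod 1$, the equation $f'(x)=f'(x+\tfrac12)$ reduces to $\sen(2\pi x)=0$, yielding exactly the two coincidence points $0$ and $\tfrac12$; these form one Borsuk-Ulam pair $C=\{(0,\tfrac12)\}$. To verify singleness in the sense of Proposition~2.4 I would take the straight-line path $\gamma(t)=t/2$ from $0$ to $\tfrac12$; lifting $f'\circ\gamma$ and $f'\circ\tau\circ\gamma$ to the universal cover $\R$ shows both paths share endpoints, hence they are homotopic rel endpoints in $S^1$.

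Since $\tau$ is a translation (orientation-preserving) and $n=1$ is odd, the relevant branch of the revised pseudo-index definition is
\[
|\ind|(f',\tau;C)=\Bigl(\sum_{x_i\in C}\ind(f',f'\circ\tau;x_i)\Bigr)\bmod 2,
\]
summing one representative per Borsuk-Ulam pair. Linearising $x\mapsto f'(x)-f'(\tau(x))$ at the two zeros gives local indices $\pm 1$, so the pseudo-index is $1$; thus $C$ is essential, $\NBU(f,\tau)=1$, and $f'$ realises it, also proving the Wecken statement. The main subtlety I would watch over carefully is the interpretation of the ``sum mod $2$'': because for $n$ odd and $\tau$ orientation-preserving the local indices at $x$ and $\tau(x)$ have opposite signs, one must sum with one contribution per Borsuk-Ulam pair (which is well-defined mod $2$ precisely because swapping a representative changes the sum by an even integer) rather than over every element of the underlying coincidence set.
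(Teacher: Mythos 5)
Your proposal is correct and follows essentially the same route as the paper: the same linear representative $f(x)=ax$, the same parity split, and the same perturbation $f'(x)=ax+\tfrac13\sen(2\pi x)$ yielding the single essential class $\{(0,\tfrac12)\}$ with pseudo-index $1$. The extra care you take in verifying singleness via a lifted path and in reading the ``sum mod $2$'' as one contribution per Borsuk--Ulam pair (necessary since the indices at $x$ and $\tau(x)$ have opposite signs here) is sound and in fact makes explicit details the paper leaves implicit.
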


\section{Nielsen-Borsuk-Ulam number in $\T^2$}\label{capt2}

We must remember that two free involutions $\tau_1$, $\tau_2$ in $M$ (closed, connected and orientable $n-$manifolds) are equivalent if there exists a homeomorphism $h:M \to M$ such that $h \circ \tau_2=\tau_1 \circ h$, i.e., the diagram 
\[\xymatrix{ M \ar[r]^{\tau_2} \ar[d]_h & M \ar[d]^h \\
     M \ar[r]_{\tau_1} & M }\]
is commutative. 

In \cite[Proposition 30, Proposition 32]{daciberg} we can find a classification for free involutions on $\T^2$, i.e, the authors proved that there are two free involutions in the torus $\T^2$, up to equivalence. Consider the following free involutions: 
\begin{align*}
\tau_1: \T^2 &\to \T^2 \qquad \qquad \qquad \qquad \tau_2: \T^2 \to \T^2\\
(x,y) & \mapsto \left(x+\frac{1}{2},y \right), \qquad \qquad
(x,y)  \mapsto \left(-x,y+\frac{1}{2}\right).
\end{align*}
The free involutions $\tau_1$ and $\tau_2$ are not equivalent, because the space of orbits $\T_{\tau_1}^2:=\frac{\T^2}{x \sim \tau_1(x)}$ is homeomorphic to $\T^2$ and the space of orbits $\T_{\tau_2}^2:=\frac{\T^2}{x \sim \tau_2(x)}$ is homeomorphic to $\K^2$ (the Klein bottle). The free involution $\tau_1$ preserves orientation, while the free involution $\tau_2$ reverses orientation.

The following theorems by \cite[Theorem 2.2.3, Theorem 2.3.10]{tesevinicius} (see \cite{vinicius} for similar results), indicate which homotopy class of $[\T^2, \T^2]$ has the Borsuk-Ulam property with respect to $\tau_1$ and $\tau_2$ involutions.

\begin{teo}\label{2.2.3}
Let $\tau_1$ be the free involution above. So every class of homotopy $\beta \in [\T^2,\T^2]$ does not have the Borsuk-Ulam property with respect to $\tau_1$.
\end{teo}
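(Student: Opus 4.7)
The plan is to exhibit, in each homotopy class $\beta \in [\T^2,\T^2]$, a representative with empty Borsuk-Ulam coincidence set, so that $\beta$ automatically fails the Borsuk-Ulam property for $\tau_1$. Using the identification of $[\T^2,\T^2]$ with $2\times 2$ integer matrices from the introduction, I would first take the linear representative $f(x,y)=(ax+by,\,cx+dy)$ corresponding to $f_\# = \begin{pmatrix} a & b \\ c & d \end{pmatrix}$. Since $\tau_1(x,y)=(x+\tfrac{1}{2},y)$, a direct computation gives
$$f(\tau_1(x,y)) - f(x,y) = \bigl(\tfrac{a}{2},\,\tfrac{c}{2}\bigr) \pmod{\Z^2},$$
a constant element of $\T^2$. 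This constant is zero in $\T^2$ exactly when both $a$ and $c$ are even, so the argument splits into two cases.

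If at least one of $a,c$ is odd, the displayed constant is already a nonzero element of $\T^2$, so $\coin(f,f\circ\tau_1)=\emptyset$ and no perturbation is needed. If instead $a$ and $c$ are both even, then every point of $\T^2$ is a coincidence, so I would perturb $f$ by a small ``horizontal loop''. Fix $\epsilon\in(0,\tfrac14)$ and set
$$f'(x,y) = f(x,y) + \bigl(\epsilon\cos(2\pi x),\,\epsilon\sin(2\pi x)\bigr) \pmod{\Z^2}.$$
Because the image of the perturbation sits in a small embedded disk of $\T^2$, it is a null-homotopic map and therefore $f'\simeq f$. A second computation gives
$$f'(\tau_1(x,y)) - f'(x,y) = \bigl(-2\epsilon\cos(2\pi x),\,-2\epsilon\sin(2\pi x)\bigr),$$
which never vanishes since $\cos(2\pi x)$ and $\sin(2\pi x)$ have no common zero; hence $\coin(f',f'\circ\tau_1)=\emptyset$.

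The only delicate point is verifying that the perturbation is null-homotopic, but this is automatic once $\epsilon<\tfrac{1}{2}$, because the perturbing loop then lies inside a contractible disk of $\T^2$. There is no genuine obstruction here: the involution $\tau_1$ acts as a half-translation purely in the $x$-coordinate, so a perturbation depending only on $x$ is enough, and the geometric content reduces to the elementary fact that the antipodal map on a small circle has no fixed points. This is essentially the two-dimensional analogue of the perturbation $\epsilon(t)=\tfrac{1}{3}\sen(2\pi t)$ already used in the proof of Theorem~\ref{NBUtoroT1}.
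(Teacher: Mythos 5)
Your argument is correct. Note first that the paper itself does not prove this statement: Theorem~\ref{2.2.3} is imported verbatim from Laass's thesis \cite{tesevinicius} (see also \cite{vinicius}), where it is established with the structural machinery developed there for self-maps of surfaces of Euler characteristic zero. What you give instead is a self-contained elementary proof: the computation $f\circ\tau_1-f=(\tfrac{a}{2},\tfrac{c}{2})$ correctly isolates the only problematic classes (those with $a,c$ both even, where the linear representative satisfies $f\circ\tau_1=f$ identically), and your perturbation by the null-homotopic loop $(\epsilon\cos(2\pi x),\epsilon\sin(2\pi x))$ kills all coincidences because $\cos(2\pi x)$ and $\sin(2\pi x)$ never vanish simultaneously and $2\epsilon<1$ forces the difference to be exactly zero rather than merely zero mod $\Z^2$. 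This is very much in the spirit of the perturbations the paper uses in its own proofs of Theorems~\ref{NBUtoroT1}--\ref{NBUtoroT3}, just applied to a statement the authors chose to quote rather than reprove; what your route buys is independence from the braid-group/classification input of \cite{tesevinicius}, at no real cost since the involution $\tau_1$ is a pure translation. A small stylistic remark: the case split is not strictly necessary, since the single map $f+(\epsilon\cos(2\pi x),\epsilon\sin(2\pi x))$ with $\epsilon<\tfrac14$ has empty coincidence set in every class (when $a$ or $c$ is odd the relevant coordinate of the difference is $\tfrac12\pm 2\epsilon(\cdot)$, which cannot be an integer), but the two-case version you wrote is perfectly sound.
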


\begin{teo}\label{2.3.10}
Let $\tau_2$ be the aforementioned involution and $\beta \in [\T^2,\T^2]$ a homotopy class. Then $\beta$ has the Borsuk-Ulam property with respect to $\tau_2$ if, and only if, homomorphism $\beta_{\#}:\Z \oplus \Z \to \Z \oplus \Z$ is given by
\begin{equation*}
\beta_{\#}=
\begin{pmatrix}
m_1 & m_2\\
n_1 & n_2
\end{pmatrix}
\end{equation*}
where $(m_1,n_1) \neq (0,0)$, $m_2$ and $n_2$ are even.
\end{teo}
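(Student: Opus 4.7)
The plan is to represent each homotopy class by its linear representative and analyze the zeros of the $\tau_2$-antisymmetric difference $g = f - f \circ \tau_2$, which encode the BU coincidences.

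\textbf{Setup.} Taking $f(x,y) = (m_1 x + m_2 y, n_1 x + n_2 y) \pmod{\Z^2}$, a direct calculation gives
\[
g(x,y) = \left( 2 m_1 x - \tfrac{m_2}{2},\ 2 n_1 x - \tfrac{n_2}{2} \right) \pmod{\Z^2},
\]
which depends only on $x$ and satisfies $g \circ \tau_2 = -g$. A representative $f' \simeq f$ changes $g$ by an arbitrary $\tau_2$-antisymmetric function of the form $\eta - \eta \circ \tau_2$, so finding $f'$ without BU coincidence is the same as finding a nowhere-vanishing antisymmetric perturbation of $g$.

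\textbf{Only-if direction.} Suppose a condition fails. If $(m_1,n_1)=(0,0)$ and $m_2$ or $n_2$ is odd, then $g$ is the nonzero constant $(-m_2/2,-n_2/2)\in\T^2\setminus\{0\}$ and the linear $f$ already witnesses failure of BU. If $(m_1,n_1)=(0,0)$ and $m_2,n_2$ are both even, then $g\equiv 0$; the antisymmetric perturbation $\eta(x,y)=\epsilon(\sin 2\pi y,\cos 2\pi y)$ produces $g'=2\eta$, which is nowhere zero because its two trigonometric components never vanish simultaneously. If $(m_1,n_1)\neq(0,0)$ and (say) $m_2$ is odd, choose an antisymmetric $\eta$ whose second component has a small zero locus $Z\subset\T^2$ (for instance $\eta_2=\mu\sin 2\pi x$, giving $Z=\{0,\tfrac{1}{2}\}\times\T^1$); on $Z$ the first coordinate $2m_1 x - m_2/2 + \eta_1$ has fractional part close to $\tfrac{1}{2}$, since $m_2/2$ is a half-integer, so it remains bounded away from $\Z$ and $g'$ has no zero.

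\textbf{If-direction.} Assume $(m_1,n_1)\neq(0,0)$ and $m_2,n_2$ both even. We prove $\NBU(f,\tau_2)\geq 1$, so by the forthcoming Wecken property for $\T^2$ the class $\beta$ has the BU property. Apply the explicit antisymmetric perturbation
\[
\eta(x,y) = \delta\bigl(\cos 2\pi x \sin 2\pi y,\ \cos 2\pi x \cos 2\pi y\bigr).
\]
Solving $g + 2\eta = 0$ in $\T^2$ yields finitely many isolated zeros, naturally arranged into $\tau_2$-orbits (the BU pairs). The Jacobian of $g'$ at each zero computes the local coincidence index; consistent with $\tau_2$ reversing orientation in dimension two, $\tau_2$-paired zeros carry opposite indices. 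A $\pi_1$-loop calculation shows that every such BU pair lies in a \emph{single} BU class: the two endpoint-loops $f'\gamma$ and $f'\tau_2\gamma$ for a connecting path $\gamma$ differ by a nullhomotopic loop, because evenness of $m_2,n_2$ forces the equivariant shift $v\in\Z^2$ in the lift $G'(\tau_2 p) = -G'(p) + v$ to vanish. The pseudo-index of each single class (the local index at one representative of the BU pair, read mod $2$) evaluates to $\pm 1\neq 0$, so $\NBU(f,\tau_2)\geq 1$.

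\textbf{Main obstacle.} The delicate part is the if-direction: ensuring that the BU classes arising from a generic equivariant perturbation are single (rather than double) and that their pseudo-indices are nonzero. The evenness of $m_2,n_2$ enters exactly at this point through the equivariant shift $v\in(\Z/2)^2$; its vanishing is what makes the relevant endpoint-loops nullhomotopic, determines the single-type classification, and pins the local indices into the $\pm 1$ pattern whose parity gives the nonzero pseudo-index.
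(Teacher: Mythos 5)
First, a point of reference: the paper does not prove this statement. Theorem~\ref{2.3.10} is imported verbatim from \cite[Theorem~2.3.10]{tesevinicius} (see also \cite{vinicius}), so there is no in-paper proof to compare against and your argument must stand alone. Your reduction to the antisymmetric difference $g=f-f\circ\tau_2=(2m_1x-\tfrac{m_2}{2},\,2n_1x-\tfrac{n_2}{2})$ is correct, as are the first two cases of the only-if direction, and the overall plan (kill $\coin(f,f\circ\tau_2)$ by an equivariant perturbation when a hypothesis fails; bound it below by a homotopy invariant when both hold) matches in spirit the computations the paper does carry out in Theorem~\ref{NBUtoroT2}. One small correction: for the if-direction you need that $\NBU$ is a \emph{lower bound} in the homotopy class (\cite[2.7]{daniel2}), not the Wecken property, which is the reverse inequality.

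There are, however, three concrete gaps. (1) In the only-if case $(m_1,n_1)\neq(0,0)$, $m_2$ odd: the zero locus of $g_2'=2n_1x-\tfrac{n_2}{2}+2\mu\sin 2\pi x$ is $\{0,\tfrac{1}{2}\}\times\T^1$ only when $n_1=0$ and $n_2$ is even; for $n_1\neq 0$ it consists of roots of a degree-$2n_1$ circle map near the solutions of $2n_1x=\tfrac{n_2}{2}$, where $2m_1x-\tfrac{m_2}{2}$ need not be near $\tfrac{1}{2}$, so the argument as written fails. The correct use of the oddness of $m_2$ is the other way around: $g_1^{-1}(0)$ avoids $x=0,\tfrac{1}{2}$ (since $g_1(0)=g_1(\tfrac{1}{2})=\tfrac{1}{2}$ there), it is symmetric under $x\mapsto -x$ with $g_2(-x)=-g_2(x)$, and one then chooses an odd perturbation of the second coordinate, nonvanishing on $g_1^{-1}(0)$, to push $g_2$ off $0$ there. (2) In the if-direction, your perturbation $\eta=\delta(\cos 2\pi x\sin 2\pi y,\cos 2\pi x\cos 2\pi y)$ vanishes identically on the circles $x=\tfrac{1}{4},\tfrac{3}{4}$; when $m_1$ and $n_1$ are both even these circles lie in $\coin(f,f\circ\tau_2)$ and survive the perturbation, so the coincidence set is not finite and the index and class analysis cannot begin. (A perturbation of the shape used in Theorem~\ref{NBUtoroT2}, forcing $x,y\in\{0,\tfrac{1}{2}\}$, avoids this.) (3) Since $\tau_2$-paired points carry opposite local indices, the pseudo-index of a single class is a sum of $\pm 1$'s taken mod $2$ over one representative per pair, and the global Lefschetz count is $\det\begin{pmatrix}2m_1&0\\2n_1&0\end{pmatrix}=0$; so essentiality genuinely depends on knowing how the coincidence points distribute into Nielsen classes. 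You assert that each class is a single BU pair but never compute the partition; this is exactly the step the paper settles (for its representatives) by identifying the usual coincidence classes with the connected components of the unperturbed coincidence set via coverings. Your observation that evenness of $m_2,n_2$ makes the classes single is the right ingredient, but it must be supplemented by this class count before the pseudo-indices can be declared nonzero.
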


Those results will help us with the following:

\begin{teo}\label{NBUtoroT2}
Let $f:\T^2 \to \T^2$ be a map. Then,
\[\NBU(f,\tau_1)=0\]
and
\[\NBU(f,\tau_2)=
\left\{
\begin{array}{cl}
2 & \text{if } f_{\#}= \begin{pmatrix} p & 2k\\ q & 2l \end{pmatrix};\\
0 & \text{otherwise},
\end{array}
\right.
\]
with $p,q,k,l \in \Z$, $(p,q) \neq (0,0)$ and $f_{\#}$ being the homomorphism induced from $f$ in the fundamental group. Moreover, for each map $f:\T^2 \to \T^2$ there exists $f'$ homotopic to $f$ which realizes the $\NBU(f,\tau_i)$, i.e., the torus $\T^2$ is a Wecken type space in Nielsen-Borsuk-Ulam theory.
\end{teo}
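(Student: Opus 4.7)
The plan is to split by involution and, for $\tau_2$, by whether $f_\#$ lies in the Borsuk-Ulam range identified in Theorem~\ref{2.3.10}, producing in every case an explicit small perturbation $f'$ of a linear representative that realizes $\NBU(f,\tau_i)$.

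For $\tau_1$, take $f(x,y)=(px+by,qx+dy)$. A direct computation gives $f(x,y)-f(\tau_1(x,y))=(p/2,q/2)$ in $\T^2$, which already has no zero if $p$ or $q$ is odd. If both are even, I would perturb by $f'(x,y)=f(x,y)+(\epsilon\sen(2\pi x),\epsilon\cos(2\pi x))$; since $x\mapsto x+1/2$ flips the sign of each trigonometric summand and $(p/2,q/2)$ is integral, the resulting difference is $(2\epsilon\sen(2\pi x),2\epsilon\cos(2\pi x))$, which traces an ellipse around the origin of $\T^2$ that misses it for small $\epsilon>0$. Thus $\NBU(f,\tau_1)=0$ in every case, realized by $f'$.

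For $\tau_2$, if $(p,q)=(0,0)$ or one of $b,d$ is odd, Theorem~\ref{2.3.10} says $f$ fails the Borsuk-Ulam property, so some representative has empty coincidence set and $\NBU(f,\tau_2)=0$. In the main subcase $(p,q)\neq(0,0)$, $b=2k$, $d=2l$, I take $f'(x,y)=(px+2ky+\epsilon_1\sen(2\pi y),\ qx+2ly+\epsilon_2\cos(2\pi y))$, so that $f'(x,y)-f'(\tau_2(x,y))=(2px+2\epsilon_1\sen(2\pi y),\ 2qx+2\epsilon_2\cos(2\pi y))$ in $\T^2$ (the integers $k,l$ being absorbed). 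Setting $e=\gcd(|p|,|q|)$ and $(p,q)=e(p',q')$, solving the lattice equation yields two admissible $y$-values differing by $1/2$ together with $2e$ admissible $x$-values at each, for a total of $4e$ isolated coincidence points forming $2e$ BU pairs. The induced homomorphism on $\pi_1(\T^2)$ of the difference map is $\begin{pmatrix}2p&0\\2q&0\end{pmatrix}$, whose cokernel is $\Z/2e\oplus\Z$; the coincidences fall into $2e$ ordinary coincidence classes of two points each, indexed by $j\in\{0,\ldots,2e-1\}$, and $\tau_2$ acts on these classes by $j\mapsto -j\bmod 2e$. This yields $2$ single BU classes (at the fixed indices $j=0,e$) and $e-1$ double BU classes (the pairs $\{j,2e-j\}$). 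A Jacobian computation shows the two points of each ordinary class have indices $+1$ and $-1$, so, using the pseudo-index formulas appropriate to ``$\tau$ reverses orientation and $n$ is even'', each single class has pseudo-index $\pm 1\bmod 2=1$ (essential) while each double has pseudo-index $|(+1)+(-1)|=0$ (non-essential); therefore $\NBU(f,\tau_2)=2$.

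Wecken is automatic when $e=1$ since the perturbation already has exactly two BU pairs. For $e\geq 2$, I would realize $\NBU=2$ via further small homotopies that eliminate each non-essential double BU class by colliding the two opposite-index points inside each of its two constituent ordinary coincidence classes. The anticipated main obstacle is pinning down the $\tau_2$-action on the $2e$ ordinary coincidence classes, since it is this combinatorial step that isolates the $2$ essential singles from the $e-1$ non-essential doubles and explains why the count $\NBU=2$ is independent of $e$; after this, the Wecken cancellations for $e\geq 2$ reduce to routine surface-level collisions of opposite-index coincidences.
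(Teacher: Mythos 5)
Your computation of the value $\NBU(f,\tau_2)=2$ in the main case is correct, but it follows a genuinely different and longer route than the paper, and this difference is what creates the problem at the end. The paper's strategy is to choose the perturbation so that the two coincidence equations decouple into conditions of the form $\epsilon(x)=\epsilon(-x)$ and $\epsilon(y)=\epsilon(y+\tfrac12)$, so that $\coin(f',f'\circ\tau_2)$ consists of exactly the four points $\{0,\tfrac12\}\times\{0,\tfrac12\}$, i.e.\ two BU pairs, both single and essential; nothing is left to cancel and the realization is immediate. Your perturbation, with both correction terms functions of $y$, instead produces $4e$ coincidence points, $e=\gcd(|p|,|q|)$, and you then have to analyse the class structure. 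That analysis is right: labelling the ordinary classes by the value $s(p',q')$ of the lifted difference map in $\Z^2/\langle 2e(p',q')\rangle$ confirms that there are $2e$ classes of two points each, that $\tau_2$ acts by $j\mapsto -j\bmod 2e$, that the fixed classes $j=0,e$ are single with pseudo-index $1$, and that the $e-1$ doubles have pseudo-index $|1-1|=0$. So the lower bound $\NBU(f,\tau_2)=2$ is established (and your direct perturbation argument for $\tau_1$ is fine, indeed more self-contained than the paper's appeal to Theorem~\ref{2.2.3}).

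The genuine gap is the Wecken statement when $e\geq 2$. You are left with $2e$ BU pairs and propose to remove the $e-1$ inessential double classes by ``routine surface-level collisions of opposite-index coincidences.'' In dimension $2$ this is precisely the non-routine step: the Whitney-type cancellation of a $\pm1$ pair of coincidence points within a Nielsen class is only automatic for $n\geq 3$, and here there is the additional constraint that you may only homotope $f$, which moves $f$ and $f\circ\tau_2$ simultaneously --- this equivariance constraint is the whole reason the BU-homotopy machinery of Section~2 exists, and no cancellation respecting it is actually constructed in your argument. The repair is not to cancel at all but to choose the perturbation so that only four coincidence points occur in the first place: put a term $\epsilon(y)$ (a function of $y$ alone, odd under $y\mapsto y+\tfrac12$) in one coordinate and a term $\epsilon(x)$ (odd under $x\mapsto -x$) in the other, with the amplitudes chosen so that the solutions of the $x$-equation other than $x=0,\tfrac12$ force the $y$-equation to have no solution. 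With that choice $\coin(f',f'\circ\tau_2)=\{0,\tfrac12\}^2$, the two BU classes are the essential singles you already identified, and the Wecken claim follows without any collision argument.
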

\begin{proof}
The Theorem~\ref{2.2.3} shows that $NBU(f,\tau_1)=0$ for any $f:\T^2 \to \T^2$ and Theorem~\ref{2.3.10} describes homotopy classes such that $\NBU(f,\tau_2)=0$, easily realizing the Nielsen-Borsuk-Ulam numbers in these cases. 

Now, we just need to compute $\NBU(f,\tau_2)$ for maps such that:
\begin{equation*}
f_{\#}=
\begin{pmatrix}
p & 2k\\
q & 2l
\end{pmatrix},
\end{equation*}
with $(p,q)\neq (0,0)$. 

\begin{enumerate}
\item Suppose that $p=0$ or $q=0$. Without loss of generality, assume $q=0$. Up to homotopy we can assume that $f$ can be written as $f(x,y)=(px+2ky,2ly)$ with $p,k,l \in \Z$ and $p \neq 0$. 

The coincidence set of pair $(f,f \circ \tau_2)$ is
\[\coin(f,f \circ \tau_2)=\left\{ (x,y) \in \T^2 \mid x=\dfrac{i}{2p} \text{ with } i=0,1,\ldots,2p-1\right\},\]
because
\begin{equation*}
f(x,y)=(f \circ \tau_2)(x,y)
\Leftrightarrow 
\left\{\begin{array}{l}
px+2ky=-px+2ky+k\\
2ly=2ly+l
\end{array} \right.
\Leftrightarrow
\left\{\begin{array}{l}
2px=0\\
0=0
\end{array}. \right.
\end{equation*}

Using coverings, it is easy to see that each connected component of the set $\coin(f,f \circ \tau_2)$ is exactly one usual coincidence class of the pair $(f,f \circ \tau_2)$.

With a small perturbation $f'(x,y)=(px+2ky+\epsilon(y),2ly+\epsilon(x))$, in which $\epsilon:\T^1 \to \T^1$ is given by $\epsilon(t)=\frac{1}{n_0}\sen(2\pi t)$, $n_0 \in \Na$ and $n_0 \geqslant 3$, we have $f'$ homotopic to $f$ and 
\begin{equation*}
f'(x,y)=(f' \circ \tau_2)(x,y)
\Leftrightarrow 
\left\{\begin{array}{l}
2px+\epsilon(y)=\epsilon(y+\frac{1}{2})\\
\epsilon(x)=\epsilon(-x)
\end{array} \right.
\Leftrightarrow
\left\{\begin{array}{l}
y=0,\frac{1}{2}\\
x=0,\frac{1}{2}
\end{array}. \right.
\end{equation*}
Therefore, 
\[\coin(f',f' \circ \tau_2)=\left\{ \left(0,0\right), \left(0,\frac{1}{2}\right), \left(\frac{1}{2},0\right), \left(\frac{1}{2},\frac{1}{2}\right) \right\}\]
and the set of Borsuk-Ulam coincidences of the pair $(f',\tau_2)$ is
\[\bucoin(f',\tau_2)=\left\{
\left( \left(0,0\right), \left(0,\frac{1}{2}\right) \right), \left( \left(\frac{1}{2},0\right), \left(\frac{1}{2},\frac{1}{2}\right) \right) 
\right\}.\]

So there are two Nielsen coincidence classes $C_1=\left\{ \left(0,0\right), \left(0,\frac{1}{2}\right) \right\}$ and $C_2=\left\{ \left(\frac{1}{2},0\right), \left(\frac{1}{2},\frac{1}{2}\right) \right\}$, and two single Borsuk-Ulam coincidences classes 
$D_1=\left\{\left( \left(0,0\right), \left(0,\frac{1}{2}\right) \right)\right\}$ and $D_2=\left\{\left( \left(\frac{1}{2},0\right), \left(\frac{1}{2},\frac{1}{2}\right) \right)\right\}.$

To compute the local coincidence index we use a version that generalizes the fixed point for smooth maps presented in \cite[Definition (2.2.2)]{JezierskiMarzantowicz}. Thus, the local coincidence indices are shown in figure~\ref{figura1}.

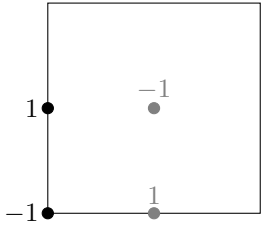
\begin{figure}[h!]\label{figura1}
\centering
\begin{tikzpicture}[scale=1.4]
\draw (0,0) rectangle (2,2);
\textcolor{black}{\draw[fill] (0,0) circle (1.5pt) node[left]{$-1$};}
\textcolor{gray}{\draw[fill] (1,0) circle (1.5pt) node[above]{$1$};}
\textcolor{black}{\draw[fill] (0,1) circle (1.5pt) node[left]{$1$};}
\textcolor{gray}{\draw[fill] (1,1) circle (1.5pt) node[above]{$-1$};}
\end{tikzpicture}
\caption{The Borsuk-Ulam coincidence classes for $f'$ with the local coincidence index of each point.}
\end{figure} 

The pseudo-index of the Borsuk-Ulam coincidence classes $D_1$ and $D_2$ are:
\[|\ind|(f,\tau_2;D_1)=\ind(f,f \circ \tau_2;(0,0)) \mod 2=1 \quad \text{and}\]
\[|\ind|(f,\tau_2;D_2)=\ind\left(f,f \circ \tau_2;\left(\frac{1}{2},0\right)\right) \mod 2=1\]  
So $D_1$ and $D_2$ are essential Borsuk-Ulam coincidence classes. Therefore,
\[\NBU(f,\tau_2)=2.\]

\item Suppose that $p \neq 0$ and $q \neq 0$. Then
\begin{align*}
f(x,y)=(f \circ \tau_2)(x,y)
&\Leftrightarrow
\left\{\begin{array}{l}
2px=0\\
2qx=0
\end{array} \right.\\
&\Leftrightarrow 
\left\{\begin{array}{lll}
x=\frac{i}{2p} & \text{with} & i=0,1,\ldots,2p-1\\
x=\frac{j}{2q} & \text{with} & j=0,1,\ldots,2q-1,
\end{array} \right. 
\end{align*}
i.e., the coincidence set of the pair $(f,f\circ \tau_2)$ is the set of points $(x,y) \in \T^2$ such that $x=\frac{i}{2p}=\frac{j}{2q}$. For example, the set $\left(0,y \right)$ and $\left(\frac{1}{2},y\right)$ are contained in the $\coin(f,f \circ \tau_2)$, so $\coin(f,f \circ \tau_2)\neq \emptyset$.

Let $a=\gcd\{p,q\}$, then $p=a\alpha$ and $q=a\beta$ for some $\alpha, \beta \in \Z$. We claim that 
$$\coin(f,f \circ \tau_2)=\left\{(x,y) \in \T^2 \mid x=\frac{k}{2a} \quad \text{with} \quad k=0,1,\ldots,2a-1 \right\}.$$

Indeed, given $\left(\frac{k}{2a},y\right) \in \T^2$, we have
\[\frac{k}{2a}=\frac{k\beta}{2a\beta}=\frac{k\beta}{2q} \quad \text{and} \quad \frac{k}{2a}=\frac{k\alpha}{2a\alpha}=\frac{k\alpha}{2p} \quad \Rightarrow \quad  \frac{k}{2a}=\frac{k\alpha}{2p}=\frac{k\beta}{2q},\]
soon $\left(\frac{k}{2a},y\right)\in \coin(f,f \circ \tau_2)$. Then, 
\[\left\{(x,y) \in \T^2 \mid x=\frac{k}{2a} \quad \text{with} \quad k=0,1,\ldots,2a-1 \right\} \subset \coin(f,f \circ \tau_2).\]
	
Now, if $(x,y) \in \coin(f,f \circ \tau_2)$ then
\[x=\frac{i}{2p}=\frac{j}{2q} \Leftrightarrow 2qi=2pj \Leftrightarrow 2a\beta i=2a\alpha j \Leftrightarrow \beta i=\alpha j.\]
Let us analyze the values of $i$ and $j$ which satisfy the above equality. For $i=0$ consider $j=0$, for $i=\alpha$ consider $j=\beta$ and successively, for $i=k\alpha$ consider $j=k\beta$, with $k=0,1,\ldots,2a-1$. Thereby, 
\[\coin(f,f \circ \tau_2) \subset \left\{(x,y) \in \T^2 \mid x=\frac{k}{2a} \quad \text{with} \quad k=0,1,\ldots,2a-1\right\},\] proving the claim.

If $(x,y) \in \coin(f,f \circ \tau_2)$, then
\begin{equation*}
f(x,y)=(f \circ \tau_2)(x,y)
\Leftrightarrow
\left\{\begin{array}{l}
x=\frac{k}{2a}\\
y=y
\end{array} \right.
\end{equation*}

Define $f':\T^2 \to \T^2$ by $f'(x,y)=(px+2ky+\epsilon(y),2ly+\epsilon(x))$. Thereby, $f'$ is homotopic to $f$ and
\begin{align}
f'(x,y)=(f' \circ \tau_2)(x,y)
&\Leftrightarrow 
\left\{\begin{array}{l}
x+\epsilon(y)=\frac{k}{2a}+\epsilon(y+\frac{1}{2})\\
\epsilon(x)=\epsilon(-x)
\end{array} \right.\\
&\Leftrightarrow \label{st2} 
\left\{\begin{array}{l}
x+\epsilon(y)=\frac{k}{2a}+\epsilon(y+\frac{1}{2})\\
x=0,\frac{1}{2}
\end{array}. \right.
\end{align}

Replacing $x=0$ in the first equation of system $\eqref{st2}$, we have $\epsilon(y)=\frac{k}{2a}+\epsilon\left(y+\frac{1}{2}\right)$. If $k=0$, the solutions in this equation are $y=0$ and $y=\frac{1}{2}$. Otherwise, for $n_0$ big enough in the definition of the map $\epsilon$, we have $\epsilon(y)\neq \frac{k}{2a}+\epsilon\left(y+\frac{1}{2}\right)$. Replacing $x=\frac{1}{2}$ in the first equation of system $\eqref{st2}$, we have $\frac{1}{2}+\epsilon(y)=\frac{k}{2a}+\epsilon\left(y+\frac{1}{2}\right)$. If $k=a$, then
$y=0$ and $y=\frac{1}{2}$ are the only solutions to this equation. Otherwise, we have $\frac{1}{2}+\epsilon(y) \neq \frac{k}{2a}+\epsilon\left(y+\frac{1}{2}\right)$ for $n_0$ big enough. Thus,
\[\coin(f',f' \circ \tau_2)=\left\{ \left(0,0\right), \left(0,\frac{1}{2}\right), \left(\frac{1}{2},0\right), \left(\frac{1}{2},\frac{1}{2}\right) \right\}\] 
and
\[\bucoin(f',\tau_2)=\left\{
\left( \left(0,0\right), \left(0,\frac{1}{2}\right) \right), \left( \left(\frac{1}{2},0\right), \left(\frac{1}{2},\frac{1}{2}\right) \right) 
\right\}.\]

Calculating the Nielsen-Borsuk-Ulam number in the same way as in the previous case, we conclude that 
\[\NBU(f,\tau_2)=2.\]
\end{enumerate}

Moreover, in both the cases $f'$ realizes the $\NBU(f,\tau_2)=2$.
\end{proof}

\section{Nielsen-Borsuk-Ulam number in $\T^3$}

In \cite{Hempel} and \cite{daciclaude} we can find classifications for free involutions on $\T^3$. Up to equivalence there are four such involutions. These are described in \cite{Hempel} by:

\begin{align*}
h_1(x,y,z)&=\left(x,y,z+\frac{1}{2}\right)\\
h_2(x,y,z)&=\left(-x,-y,z+\frac{1}{2}\right)\\
h_3(x,y,z)&=\left(x,-y,z+\frac{1}{2}\right)\\
h_4(x,y,z)&=\left(x+y,-y,z+\frac{1}{2}\right).
\end{align*}

\begin{teo}\label{NBUtoroT3}
Let $f:\T^3 \to \T^3$ be a map such that $f_{\#}:\pi_1(\T^3) \to \pi_1(\T^3)$ is represented by the matrix $f_{\#}=\begin{pmatrix}
a & b & c\\
r & s & t\\
u & v & w
\end{pmatrix}.$ Then,
\[\NBU(f,h_1)=0, \qquad \NBU(f,h_3)=0, \quad  \NBU(f,h_4)=0 \quad \text{and} \]
\[\NBU(f,h_2)=
\left\{
\begin{array}{cl}
4 & \text{if } c,t,w \text{ are even}, (a,r,u)\neq(0,0,0),\\ 
  & (b,s,v)\neq(0,0,0) \text{ and } (p,q)\neq(0,0)\\
  & \text{or}\\
  & \text{if } c,t,w \text{ are even}, (a,r,u)\neq(0,0,0),\\ 
  & (b,s,v)\neq(0,0,0), (p,q)=(0,0) \text{ and } u=0\\
0 & \text{otherwise},
\end{array}
\right.
\]
with $p=\det \begin{pmatrix}
r & s\\ u & v
\end{pmatrix}$ 
and
$q=\det \begin{pmatrix}
a & b\\ u & v
\end{pmatrix}$. Moreover, the torus $\T^3$ is a Wecken type space in the Nielsen-Borsuk-Ulam theory.
\end{teo}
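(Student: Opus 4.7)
The plan is to mimic the reasoning of Theorems~\ref{NBUtoroT1} and~\ref{NBUtoroT2}: for each involution $h_i$ I analyze the coincidence equation $f=f\circ h_i$ componentwise modulo $\Z^3$, identify when it has no solution (so $\NBU(f,h_i)=0$ automatically), and otherwise construct a small null-homotopic perturbation $f'=f+E$ built from the bump $\epsilon(t)=\tfrac{1}{n_0}\sen(2\pi t)$ making $\coin(f',f'\circ h_i)$ finite. I then enumerate the Nielsen coincidence classes via the BU-equivalence subgroup $f_{\#}(h_{i\#}-I)(\Z^3)\subseteq\Z^3$ --- two coincidence points $A,B$ lie in the same Nielsen class of $(f',f'\circ h_i)$ iff the lift-endpoint difference between $f'\circ\gamma$ and $f'\circ h_i\circ\gamma$ lies in this subgroup for some path $\gamma$ between them --- and compute each pseudo-index from the Jacobian of $f'-f'\circ h_i$. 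The induced maps are $h_{1\#}=I$, $h_{2\#}=\operatorname{diag}(-1,-1,1)$, $h_{3\#}=\operatorname{diag}(1,-1,1)$, and $h_{4\#}=\bigl(\begin{smallmatrix}1&1&0\\0&-1&0\\0&0&1\end{smallmatrix}\bigr)$; $h_1$ and $h_2$ preserve orientation while $h_3$ and $h_4$ reverse it. Since $n=3$ is odd, the single-class pseudo-index formula reads $\sum\ind\bmod 2$ for $h_1,h_2$ and $\ind(C)/2$ for $h_3,h_4$.

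For $h_1$ the equation forces $c,t,w$ to be even (else $\coin=\emptyset$); when they are even, $f\equiv f\circ h_1$, and I perturb with $E(x,y,z)=(\epsilon(x+z),\epsilon(y+z),\epsilon(z))$, reducing $\coin(f',f'\circ h_1)$ to the $8$ points of $\{0,\tfrac12\}^3$ and hence $4$ BU pairs. Because $h_{1\#}-I=0$, the BU-equivalence subgroup is trivial, forcing all $4$ pairs into a single Nielsen (hence BU) class; a direct Jacobian computation yields first-coordinate indices summing to $0\pmod 2$, so the pseudo-index vanishes and $\NBU(f,h_1)=0$. The same scheme handles $h_3$ and $h_4$: the coincidence equation reduces to parity conditions on the appropriate entries of $f_{\#}$ (involving $b,s,v$ for $h_3$ and the combinations $2b-a$, $2s-r$, $2v-u$ for $h_4$); when they are met, a tailored $E$ makes $\coin$ finite, the image $f_{\#}(h_{i\#}-I)(\Z^3)$ is a rank-$1$ sublattice of $\Z^3$ (namely $\Z\cdot(b,s,v)$ for $h_3$ and $\Z\cdot(a-2b,r-2s,u-2v)$ for $h_4$, up to factors of $2$) which collapses the BU pairs into classes whose signed indices cancel, giving $\ind(C)/2=0$ and $\NBU(f,h_i)=0$.

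For $h_2$ the equation is $z$-independent: $2ax+2by\equiv c/2$, $2rx+2sy\equiv t/2$, $2ux+2vy\equiv w/2\pmod 1$, empty unless $c,t,w$ are all even. In that case the $4$ base points $(x_0,y_0)\in\{0,\tfrac12\}^2$ yield $4$ $h_2$-invariant circles of coincidences; the perturbation $E(x,y,z)=(\epsilon(z),\epsilon(x-y),\epsilon(y))$ reduces each circle to $2$ points, producing $8$ coincidence points paired into $4$ BU pairs. The BU-equivalence subgroup is $f_{\#}(h_{2\#}-I)(\Z^3)=2\Z(a,r,u)+2\Z(b,s,v)\subseteq\Z^3$, so the $4$ base points represent $4$ distinct cosets of this subgroup iff $(a,r,u)\neq 0$, $(b,s,v)\neq 0$, and either $(p,q)\neq(0,0)$ --- ensuring linear independence of $(a,r,u)$ and $(b,s,v)$ over $\Q$ --- or $(p,q)=(0,0)$ with $u=0$, where a direct computation shows the four cosets still separate despite the rank-$1$ degeneracy. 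In those cases each of the $4$ BU pairs forms a single essential BU class with first-coordinate Jacobian index $\pm 1$, so pseudo-index $1\pmod 2$ and $\NBU(f,h_2)=4$. In every other subcase (one column zero, or $(p,q)=(0,0)$ with $u\neq 0$), the cosets collapse and signed-index cancellation forces $\NBU=0$.

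The main obstacle will be the coset analysis for $h_2$ when $(p,q)=(0,0)$: pinpointing why the parity condition $u=0$ versus $u\neq 0$ flips $\NBU$ between $4$ and $0$. This requires tracking the quotient $\Z^3/\bigl(2\Z(a,r,u)+2\Z(b,s,v)\bigr)$ under the rank-$1$ degeneracy forced by $p=q=0$, and verifying that the four base points $\{0,(a,r,u),(b,s,v),(a+b,r+s,u+v)\}$ collapse to fewer than four cosets precisely when $u\neq 0$. Once this coset analysis is complete, the pseudo-index computations follow routinely from the Jacobian formula, the explicit perturbations $f'$ constructed in each case realize the stated values of $\NBU$, and the Wecken property of $\T^3$ follows.
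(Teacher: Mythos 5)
Your coset framework for $h_2$ --- Nielsen classes of $(f',f'\circ h_2)$ indexed by cosets of $f_{\#}(h_{2\#}-I)(\Z^3)=2\Z(a,r,u)+2\Z(b,s,v)$, essentiality read off from whether the four base values $0$, $(a,r,u)$, $(b,s,v)$, $(a+b,r+s,u+v)$ occupy distinct cosets --- is sound and in fact more systematic than the paper's case-by-case perturbation argument. But the proposal has genuine gaps. The main one: your claim that the $h_2$-coincidence system is ``empty unless $c,t,w$ are all even'' is false. If $(a,b)\neq(0,0)$ the equation $2ax+2by\equiv c/2 \pmod 1$ has solutions for either parity of $c$ (take $a=1$, $b=0$, $c$ odd: $x=\tfrac14,\tfrac34$). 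The cases with some of $c,t,w$ odd occupy the bulk of the paper's proof of the $h_2$ part; there the coincidence set is a nonempty union of circles whose $\tau$-orbits straddle two distinct Nielsen classes, so one must either build perturbations that genuinely empty the set (the paper's route, using a second bump whose half-period coincidence set $\{\tfrac14,\tfrac34\}$ is disjoint from the $\{0,\tfrac12\}$ of $\epsilon$) or run a double-class pseudo-index cancellation. You cannot dismiss these cases. Relatedly, even with $c,t,w$ all even the coincidence set is generally larger than four circles over $\{0,\tfrac12\}^2$ (e.g.\ $a=2$ gives $4x\equiv 0$, hence circles at $x=\tfrac14,\tfrac34$ as well); you must check that your perturbation actually removes the extra components.

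Two further points. For $h_1$ your justification is inverted: a trivial subgroup $f_{\#}(h_{1\#}-I)(\Z^3)=0$ gives the \emph{finest} partition into classes, not a single class; the eight points lie in one class only because the perturbation is small, so the lift of $f'-f'\circ h_1$ to $\R^3$ stays in a ball about $0$. For $h_3,h_4$ the assertion that ``signed indices cancel'' is exactly what needs proof; it does follow from $\det\bigl(f_{\#}(I-h_{i\#})\bigr)=0$ together with the homogeneity of coincidence indices on tori, but you must supply that argument (the paper avoids all index computations for $h_1,h_3,h_4$ by exhibiting coincidence-free representatives $f'$, which is cleaner). Finally, the sub-case you defer, $(p,q)=(0,0)$, is not a formality: there $u=0$ forces $v=0$, the third row vanishes, and whether the four base values separate is governed by the remaining $2\times 2$ data, not by $u$ alone. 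For instance, first row $(1,1,0)$ and the other rows zero satisfies every hypothesis of the ``$\NBU=4$'' clause, yet the base values $0,(1,0,0),(1,0,0),(2,0,0)$ collapse modulo $2\Z(1,0,0)$, and the perturbation $f'(x,y,z)=\bigl(x+y,\tfrac{1}{n_0}\cos(2\pi z),\tfrac{1}{n_0}\sen(2\pi z)\bigr)$ has empty coincidence set. So the deferred coset analysis must be carried out in full and its outcome compared carefully with the stated dichotomy rather than assumed to match it.
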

\begin{obs}
The fact that the torus $\T^3$ is a Wecken space in the Nielsen-Borsuk-Ulam theory has already been demonstrated, see \cite[Theorem 3.5]{daniel2}.
\end{obs}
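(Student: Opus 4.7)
The plan is to adapt the strategy of Theorems~\ref{NBUtoroT1} and~\ref{NBUtoroT2} to each of the four involutions $h_1,h_2,h_3,h_4$ separately. In each case I first analyze the linear coincidence equations $f(x,y,z) = f(h_i(x,y,z))$ determined by the matrix $f_\#$ to decide when $\coin(f,f\circ h_i)$ is already empty (giving $\NBU = 0$ trivially), and then handle the remaining cases (where the unperturbed coincidence set has positive dimension) by a small perturbation $f'$ homotopic to $f$ built from sine perturbations as in Section~\ref{capt2}.

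For $h_1,h_3,h_4$, where the claim is $\NBU(f,h_i)=0$, the goal is to exhibit a perturbation with empty Borsuk-Ulam coincidence set. For $h_1(x,y,z)=(x,y,z+\tfrac12)$ the linear coincidence condition reduces to asking that $c,t,w$ all be even; if this fails, $\coin(f,f\circ h_1)=\emptyset$ immediately, while if it holds, the null-homotopic perturbation $p(x,y,z)=(\tfrac{1}{4}\sen(2\pi z),\tfrac{1}{4}\cos(2\pi z),0)$ yields $p-p\circ h_1=(\tfrac{1}{2}\sen(2\pi z),\tfrac{1}{2}\cos(2\pi z),0)$, which is never in $\Z^3$, so $\coin(f',f'\circ h_1)=\emptyset$. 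For $h_3$ and $h_4$, the involutions reverse orientation and involve a sign change in $y$ (and for $h_4$ an additional $x\mapsto x+y$ shear); analogous loop perturbations can be constructed coordinate by coordinate so as to avoid $\Z^3$ along the entire fibre of the involution, and the case analysis is directly parallel to that of $h_1$.

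The principal case is $h_2(x,y,z)=(-x,-y,z+\tfrac12)$, which preserves orientation. Since $n=3$ is odd and $h_2$ preserves orientation, the index relation $\ind(f,f\circ h_2;c)=-\ind(f,f\circ h_2;h_2(c))$ forces every single Borsuk-Ulam class to have pseudo-index zero modulo $2$, so only double classes can contribute. The coincidence equations $f(x,y,z)=f(h_2(x,y,z))$ reduce to
\begin{align*}
2ax+2by &\equiv c/2 \pmod{1},\\
2rx+2sy &\equiv t/2 \pmod{1},\\
2ux+2vy &\equiv w/2 \pmod{1},
\end{align*}
depending only on $(x,y)$. Analysing consistency of this system by Cramer-type arguments on the $2\times 2$ minors $p$ and $q$ produces the combinatorial hypotheses stated in the theorem. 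In the essential case we then perturb by an expression of the form
\[
f'(x,y,z)=f(x,y,z)+\bigl(\epsilon(y),\,\epsilon(z),\,\epsilon(x)\bigr),\qquad \epsilon(t)=\tfrac{1}{n_0}\sen(2\pi t),
\]
which, for $n_0$ large, should produce exactly eight coincidence points at the vertices of $\{0,\tfrac12\}^3$, paired by $h_2$ into four Borsuk-Ulam pairs; each pair will turn out to be a double Borsuk-Ulam class of pseudo-index $\pm 1$, giving $\NBU(f,h_2)=4$.

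The hard part will be the detailed case analysis for $h_2$: identifying precisely when the three congruences above admit a common solution in $\T^2$ (this is where the minors $p,q$ and the subcase $u=0$ enter), verifying that the eight perturbed coincidence points indeed form four \emph{double} Borsuk-Ulam classes rather than any other combination of singles and doubles, and computing the local coincidence indices of each $C_1$ to be $\pm 1$. The degenerate subcases (for instance, $(p,q)=(0,0)$ with $u=0$, where the last row of $f_\#$ reduces to $(0,0,w)$ and the problem partially collapses to the $\T^2$ computation of Theorem~\ref{NBUtoroT2}) form a significant bookkeeping challenge and will need to be treated separately from the generic situation.
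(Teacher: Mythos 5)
Your overall strategy (linear representatives, sine perturbations, case analysis on the minors $p,q$) is the same as the paper's, and your treatment of $h_1$ is fine --- your $\cos(2\pi z)$ perturbation is in fact a cleaner variant of the paper's $\sen(\pi z)$ device. But your analysis of $h_2$ contains a genuine error. You argue that, since $h_2$ preserves orientation and $n=3$ is odd, the relation $\ind(f,f\circ h_2;c)=-\ind(f,f\circ h_2;h_2(c))$ forces every \emph{single} class to have pseudo-index $0 \bmod 2$, so that only double classes can be essential, and you then assert that the eight perturbed coincidence points form four \emph{double} classes. Both claims are wrong. In the preserving/odd branch of the definition the mod-$2$ sum runs over one representative $x_i$ of each pair $(x_i,\tau(x_i))$, not over all points of the class (this is exactly how it is used in the $\T^2$ computation, where $|\ind|(f,\tau_2;D_1)=\ind(f,f\circ\tau_2;(0,0))\bmod 2=1$ even though the indices at $(0,0)$ and $(0,\frac12)$ cancel); the relation $\ind(c)=-\ind(\tau(c))$ makes that sum well defined mod $2$, but certainly not zero. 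Moreover the four classes in the main case are \emph{single}: for the pair $\bigl((0,0,0),(0,0,\frac12)\bigr)$ the path $\gamma(s)=(0,0,\frac{s}{2})$ satisfies $f\circ h_2\circ\gamma(s)=f\circ\gamma(s)+(\frac{c}{2},\frac{t}{2},\frac{w}{2})=f\circ\gamma(s)$ in $\T^3$ because $c,t,w$ are even, so $f\circ\gamma$ and $f\circ h_2\circ\gamma$ are equal, hence homotopic rel endpoints. The paper accordingly computes the pseudo-index of four \emph{single} classes, each $\equiv 1 \bmod 2$, to obtain $\NBU(f,h_2)=4$. You happen to land on the same number because the double-class branch of the definition would also give $1$ here, but the principle ``only double classes can contribute'' is false and would yield the wrong answer ($\NBU=0$) precisely in the situation at hand.

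A secondary, smaller gap: for $h_3$ and $h_4$ the coincidence equations constrain $y$ (for $h_3$ they read $2by=\frac{c}{2}$, $2sy=\frac{t}{2}$, $2vy=\frac{w}{2}$, with a shear-modified version for $h_4$), so the unperturbed coincidence set is a finite union of $2$-subtori $\{y=y_0\}$ rather than all of $\T^3$. Emptying it requires the case analysis on how many of these coefficients vanish (the paper's cases (1)--(4)), where the $y$-constraint and the $z$-perturbations interact; this is not ``directly parallel'' to $h_1$, although the perturbations you propose can be adapted to each case.
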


\begin{proof}
We can suppose (up to homotopy) that $f:\T^3 \to \T^3$ is given by  \[f(x,y,z)=(ax+by+cz,rx+sy+tz,ux+vy+wz).\]

The proof of the theorem will be presented in the following subsections:
 
\subsection{$h_1(x,y,z)=\left(x,y,z+\frac{1}{2}\right)$, $h_3(x,y,z)=\left(x,-y,z+\frac{1}{2}\right)$ and $h_4(x,y,z)=\left(x+y,-y,z+\frac{1}{2}\right)$}

The free involutions $h_1$, $h_3$ and $h_4$ can be written as
\[h_{2i+ij+1}(x,y,z)=\left(x+ijy,(-1)^iy,z+\frac{1}{2}\right) \text{ where } i,j \in \{0,1\}.\]

The equation $f(x,y,z)=(f \circ h_{2i+ij+1})(x,y,z)$ corresponds to the following system in the group $\T^3$:
\begin{equation*}
\left\{\begin{array}{l}
ax+by+cz=ax+aijy+b(-1)^iy+cz+\frac{c}{2}\\
rx+sy+tz=rx+rijy+s(-1)^iy+tz+\frac{t}{2}\\
ux+vy+wz=ux+uijy+v(-1)^iy+wz+\frac{w}{2}
\end{array} \right.
\Leftrightarrow 
\end{equation*}
\begin{equation*}
\left\{\begin{array}{l}
(b-aij-b(-1)^i)y=\frac{c}{2}\\
(s-rij-s(-1)^i)y=\frac{t}{2}\\
(v-uij-v(-1)^i)y=\frac{w}{2}
\end{array} \right.
\Leftrightarrow 
\left\{\begin{array}{l}
A_{ij}y=\frac{c}{2}\\
B_{ij}y=\frac{t}{2}\\
C_{ij}y=\frac{w}{2}
\end{array}, \right.
\end{equation*}
where $A_{ij}=b-aij-b(-1)^i$, $B_{ij}=s-rij-s(-1)^i$ and $C_{ij}=v-uij-v(-1)^i$. This way, the $\NBU(f,h_{2i+ij+1})$ calculation will be divided into a number of cases:
\begin{enumerate}
\item For $A_{ij}=B_{ij}=C_{ij}=0$ consider $f':\T^3 \to \T^3$ defined by 
$$f'(x,y,z)=\left(ax+by+cz,rx+sy+tz+\epsilon(z),ux+vy+wz+\delta(z)\right),$$         
where $\epsilon, \delta:\T^1 \to \T^1$ are given by $\epsilon(z)=\frac{1}{n_0}\sen(2\pi z)$ and $\delta(z)=\frac{1}{n_0}\sen(\pi z),$ with $n_0 \in \Na$ conveniently chosen. Note that the map $f'$ is homotopic to $f$ and the coincidence set $\coin(f',f' \circ h_{2i+ij+1})$ is formed by the points $(x,y,z) \in \T^3$ such that
\begin{equation}\label{2sisth1}
f'(x,y,z)=(f' \circ h_{2i+ij+1})(x,y,z) 
\Leftrightarrow
\left\{\begin{array}{l}
0=\frac{c}{2}\\
\epsilon(z)=\frac{t}{2}+\epsilon(z+\frac{1}{2})\\
\delta(z)=\frac{w}{2}+\delta(z+\frac{1}{2})
\end{array}. \right.
\end{equation}
If the integer $c$ is odd then the system~\eqref{2sisth1} has no solution, so we have $\coin(f',f' \circ h_{2i+ij+1})=\emptyset$. For $c$ even, we will solve system~\eqref{2sisth1} in two cases:
\begin{enumerate}
	\item $t$ odd or $w$ odd: Assume, without loss of generality, that $t$ is odd. Then, the second equation of the system~\eqref{2sisth1} is 
	\[\epsilon(z)=\frac{1}{2}+\epsilon\left(z+\frac{1}{2}\right) \qquad \Leftrightarrow \qquad \frac{2}{n_0}\sen(2\pi z)=\frac{1}{2}.\]  
	Choosing $n_0 \geqslant 5$, we have $\frac{2}{n_0}\sen(2\pi z)<\frac{1}{2}$. Thereby, $\epsilon(z) \neq \frac{1}{2}+\epsilon\left(z+\frac{1}{2}\right)$ for all $z$, concluding that $\coin(f',f' \circ h_{2i+ij+1})=\emptyset$ in this case.
	\item $t$ even and $w$ even: Then,
	\begin{equation*}
	f'(x,y,z)=(f' \circ h_{2i+ij+1})(x,y,z)  \Leftrightarrow  
	\left\{\begin{array}{l}
	0=0\\
	\epsilon(z)=\epsilon(z+\frac{1}{2})\\
	\delta(z)=\delta(z+\frac{1}{2})
	\end{array} \right.
	\Leftrightarrow  
	\left\{\begin{array}{l}
	0=0\\
	z=0,\frac{1}{2}\\
	z=\frac{1}{4},\frac{3}{4}
	\end{array}. \right.
	\end{equation*}
	concluding that $\coin(f',f' \circ h_{2i+ij+1})=\emptyset$ in this case.
\end{enumerate}
\item Two of the integers $A_{ij}$, $B_{ij}$ and $C_{ij}$ are null. Assume, without loss of generality, that $A_{ij}=0$, $B_{ij}=0$ and $C_{ij}\neq 0$. Then,
\begin{equation}\label{2sisth4}
f(x,y,z)=(f \circ h_{2i+ij+1})(x,y,z) \Leftrightarrow 
\left\{\begin{array}{l}
0=\frac{c}{2}\\
0=\frac{t}{2}\\
C_{ij}y=\frac{w}{2}
\end{array}. \right.
\end{equation}
For $c$ odd or $t$ odd, it can  immediately be concluded that the system~\eqref{2sisth4} has no solution. If both integers are even, consider $f':\T^3 \to \T^3$ defined by $f'(x,y,z)=(ax+by+cz+\epsilon(z),rx+sy+tz+\delta(z),ux+vy+wz).$  Note that $f'$ is homotopic to $f$ and $\coin(f',f' \circ h_{2i+ij+1})=\emptyset$, because
\begin{equation}\label{3sisth4}
f'(x,y,z)=(f' \circ h_{2i+ij+1})(x,y,z) \Leftrightarrow 
\left\{\begin{array}{l}
\epsilon(z)=\epsilon(z+\frac{1}{2})\\
\delta(z)=\delta(z+\frac{1}{2})\\
C_{ij}y=\frac{w}{2}
\end{array} \right.
\Leftrightarrow 
\left\{\begin{array}{l}
z=0,\frac{1}{2}\\
z=\frac{1}{4},\frac{3}{4}\\
C_{ij}y=\frac{w}{2}
\end{array}. \right.
\end{equation} 
\item Only one of the integers $A_{ij}$, $B_{ij}$ and $C_{ij}$ is null. Without loss of generality, suppose $A_{ij}=0$, $B_{ij}\neq 0$ and $C_{ij}\neq 0$. So,
\begin{equation*}
f(x,y,z)=(f \circ h_{2i+ij+1})(x,y,z) \Leftrightarrow 
\left\{\begin{array}{l}
0=\frac{c}{2}\\
B_{ij}y=\frac{t}{2}\\
C_{ij}y=\frac{w}{2}
\end{array}. \right.
\end{equation*}
Obviously, if $c$ is odd then $\coin(f,f\circ h_{2i+ij+1})=\emptyset$. Now, for $c$ even, consider $f':\T^3 \to \T^3$ defined by $f'(x,y,z)=(ax+by+cz+\delta(z),rx+sy+tz+\epsilon(z),ux+vy+wz)$. Then, 
\begin{equation}\label{4sisth4}
f'(x,y,z)=(f' \circ h_{2i+ij+1})(x,y,z) \Leftrightarrow 
\left\{\begin{array}{l}
\delta(z)=\delta(z+\frac{1}{2})\\
B_{ij}y+\epsilon(z)=\frac{t}{2}+\epsilon(z+\frac{1}{2})\\
C_{ij}y=\frac{w}{2}
\end{array} \right.
\end{equation}
Let $(x,y,z) \in \T^3$ be such that $y$ is the solution to the equation $C_{ij}y=\frac{w}{2}$. Like $C_{ij} \neq 0$, there exists a finite amount of $y$ satisfying this equation. If there exists $y$ such that $Cy=\frac{w}{2}$ and $By=\frac{t}{2}$, then the system~\eqref{4sisth4} coincides with the system~\eqref{3sisth4}. Therefore, such values of $y$ are not solutions to the system~$\eqref{4sisth4}$. For the values of $y$ such that $C_{ij}y=\frac{w}{2}$ and $B_{ij}y \neq \frac{t}{2}$, we have $By+\epsilon(z) \neq \frac{t}{2}+\epsilon(z+\frac{1}{2})$ for $n_0$ big enough. This demonstrates that all values of $y$ which satisfy the equation $Cy=\frac{w}{2}$ are not solutions to the system~\eqref{4sisth4}. Therefore, $\coin(f',f'\circ h_{2i+ij+1})=\emptyset$. 
\item If none of the integers $A_{ij}$, $B_{ij}$ and $C_{ij}$ are null. Consider $f':\T^3 \to \T^3$ defined by 
$$f'(x,y,z)=\left(ax+by+cz,rx+sy+tz+\epsilon(z),ux+vy+wz+\delta(z)\right).$$ Note that the map $f'$ is homotopic to $f$. Now,
\begin{equation}\label{novoestrela}
f'(x,y,z)=(f' \circ h_{2i+ij+1})(x,y,z)  \Leftrightarrow  
\left\{\begin{array}{l}
A_{ij}y=\frac{c}{2}\\
B_{ij}y+\epsilon(z)=\frac{t}{2}+\epsilon(z+\frac{1}{2})\\
C_{ij}y+\delta(z)=\frac{w}{2}+\delta(z+\frac{1}{2})
\end{array}. \right.
\end{equation}
Suppose that $A_{ij}y=\frac{c}{2}$ (otherwise $\coin(f',f' \circ h_{2i+ij+1})=\emptyset$). The $\coin(f',f' \circ h_{2i+ij+1})$ calculation will be divided into two cases: 
\begin{enumerate}
	\item $B_{ij}y=\frac{t}{2}$ and $C_{ij}y=\frac{w}{2}$;\\
	Thus
	\begin{equation*}
	f'(x,y,z)=(f' \circ h_{2i+ij+1})(x,y,z)  \Leftrightarrow  
	\left\{\begin{array}{l}
	0=0\\
	\epsilon(z)=\epsilon(z+\frac{1}{2})\\
	\delta(z)=\delta(z+\frac{1}{2})
	\end{array} \right.
	\Leftrightarrow  
	\left\{\begin{array}{l}
	0=0\\
	z=0,\frac{1}{2}\\
	z=\frac{1}{4},\frac{3}{4}
	\end{array}, \right.
	\end{equation*}
	concluding that the system~\eqref{novoestrela} has no solution.
	\item $B_{ij}y \neq \frac{t}{2}$ or $C_{ij}y \neq \frac{w}{2}$;\\
	Without loss of generality, suppose $B_{ij}y \neq \frac{t}{2}$. 
	Like $A_{ij} \neq 0$ and $B_{ij} \neq 0$, there exists a finite amount of $y$ satisfying the conditions $A_{ij}y=\frac{c}{2}$ and $B_{ij}y \neq \frac{t}{2}$. Choosing $n_0$ big enough we have $B_{ij}y+\epsilon(z) \neq \frac{t}{2}+\epsilon(z+\frac{1}{2})$, concluding that the system \eqref{novoestrela} has no solution.
\end{enumerate}
\end{enumerate} 

For all possible cases, $\coin(f',f' \circ h_{2i+ij+1})=\emptyset$. Therefore, \[\NBU(f,h_{2i+ij+1})=0.\]

\subsection{$h_2(x,y,z)=\left(-x,-y,z+\frac{1}{2}\right)$}

Consider the integer values
\begin{eqnarray}
&p=\det \begin{pmatrix}
r & s\\ u & v
\end{pmatrix}=rv-su, \qquad
q=\det \begin{pmatrix}
a & b\\ u & v
\end{pmatrix}=av-bu \nonumber \\
\text{and} & o=\det \begin{pmatrix}
a & b\\ r & s
\end{pmatrix}=as-br \nonumber
\end{eqnarray}

The proof for the free involution $h_2$ will be divided as follows: first $(a,r,u)=(0,0,0)$ or $(b,s,v)=(0,0,0)$, the second case $(a,r,u) \neq (0,0,0)$ and $(b,s,v) \neq (0,0,0)$ will be divided into four new cases: $c,t$ and $w$ are even integers; the set $\{c,t,w\}$ has an odd integer and two even numbers; the set $\{c,t,w\}$ has two odd numbers and an even integer; and lastly, $c,t$ and $w$ are odd integers. In these subcases, they will be analyzed when $p$ and $q$ are zeros or not.

First, if $(a,r,u)=(0,0,0)$ or $(b,s,v)=(0,0,0)$ then the equation $f(x,y,z)=(f \circ h_2)(x,y,z)$ corresponds to a system which is similar to the system studied in the previous section. So, applying the same technique used previously we can conclude that $\coin(f',f'\circ h_2)=\emptyset$ for some $f'$ is homotopic to $f$. Therefore, $\NBU(f,h_2)=0$.

In the second case $(a,r,u)\neq(0,0,0)$ and $(b,s,v) \neq (0,0,0)$. Without loss of generality, suppose $a\neq 0$. The computation of $\NBU(f,h_2)$ will be done in four sub-cases:

First subcase, suppose that $c$, $t$ and $w$ are even integers. Thus,
\begin{align}
f(x,y,z)=(f \circ h_2)(x,y,z) & \Leftrightarrow 
\left\{\begin{array}{l}
ax+by+cz=-ax-by+cz+\frac{c}{2}\\
rx+sy+tz=-rx-sy+tz+\frac{t}{2}\\
ux+vy+wz=-ux-vy+wz+\frac{w}{2}
\end{array} \right.\\
& \Leftrightarrow \label{1estrelah2}
\left\{\begin{array}{l}
2ax+2by=k_1\\
2rx+2sy=k_2\\
2ux+2vy=k_3
\end{array}, \right.
\end{align} 
with $k_1$, $k_2$, $k_3 \in \Z$. Note that $(0,0,z)$, $\left(0,\frac{1}{2},z\right)$, $\left(\frac{1}{2},0,z\right)$ and $\left(\frac{1}{2},\frac{1}{2},z\right) \in \T^3$ are solutions of the system~\eqref{1estrelah2}, for all $z$. Thereby, $\coin(f,f\circ h_2)\neq \emptyset$. 

To resolve the system~$\eqref{1estrelah2}$ in $\T^3$, i.e., find the points of coincidences between the maps $f$ and $f\circ h_2$, a different technique will be used. This technique consists of finding possible solutions to the system~$\eqref{1estrelah2}$ in $\R^3$ and projecting said solutions in $\T^3$, through natural projection $\pi:\R^3 \to \T^3$ given by $\pi(x,y,z)=[(x,y,z)]_{\T^3}$, getting the points of $\coin(f,f\circ h_2)$. In other words, the objective is to find points of coincidence between pairs of coverings  $(\widetilde{f},\widetilde{f\circ h_2})$ and project them by $\pi$ in $\T^3$, in order to find what the system~$\eqref{1estrelah2}$ solutions in $\T^3$ are. 

Note that changing the covering class of the pair $(\widetilde{f},\widetilde{f\circ h_2})$, the values $k_1$, $k_2$ and $k_3$ may vary, and so we can suppose all such numbers to be non-nulls.

Studying the system~$\eqref{1estrelah2}$ in $\R^3$, one obtains from the first and third equations that the triples $(x,y,z) \in \R^3$ with
\begin{equation*}
x=\frac{qk_1-abk_3+buk_1}{2qa} \qquad \text{and} \qquad y=\frac{ak_3-uk_1}{2q}
\end{equation*}
are possible solutions, for $q\neq 0$. Projecting these possible solutions in $\T^3$, like $(qk_1-abk_3+buk_1) \in \Z$ and $(ak_3-uk_1) \in \Z$, it follows that
\begin{equation*}
\coin(f,f\circ h_2)\subseteq S=\{(x,y,z) \in \T^3\ |\  2qax=0 \quad \text{and} \quad 2qy=0\},
\end{equation*}
since any coincidence point between the maps $f$ and $f \circ h_2$ in $\T^3$ is the image of a coincidence point of some pair of coverings $(\widetilde{f},\widetilde{f\circ h_2})$.

Consider the triples $(x,y,z) \in S \cap \coin(f,f\circ h_2)$, then
\begin{equation*}
f(x,y,z)=(f \circ h_2)(x,y,z) \Leftrightarrow 
\left\{\begin{array}{l}
x=\frac{i}{2qa}\\
y=\frac{j}{2q}\\
0=0
\end{array}, \right.
\end{equation*}
where $i \in \{0,1,2,\ldots,2qa-1\}$ and $j \in \{0,1,\ldots,2q-1\}$.

Let $f':\T^3 \to \T^3$  be defined by $f'(x,y,z)=(ax+by+cz+\epsilon(y),rx+sy+tz+\epsilon(z),ux+vy+wz+\epsilon(x))$, where $\epsilon:\T^1 \to \T^1$ is given by $\epsilon(z)=\frac{1}{n_0}\sen(2\pi z)$ with $n_0 \in \Na$ conveniently chosen. Then,
\begin{align*}
f'(x,y,z)=(f' \circ h_2)(x,y,z)& \Leftrightarrow 
\left\{\begin{array}{l}
x+\epsilon(y)=\frac{i}{2qa}+\epsilon(-y)\\
y+\epsilon(z)=\frac{j}{2q}+\epsilon(z+\frac{1}{2})\\
\epsilon(x)=\epsilon(-x)
\end{array} \right.\\
 & \Leftrightarrow 
\left\{\begin{array}{l}
y=0,\frac{1}{2}\\
z=0,\frac{1}{2}\\
x=0,\frac{1}{2}
\end{array}. \right.
\end{align*}
Therewith, $\coin(f',f'\circ h_2)=\left\{\begin{array}{l}
(0,0,0),(0,0,\frac{1}{2}),(0,\frac{1}{2},0),(0,\frac{1}{2},\frac{1}{2})\\
(\frac{1}{2},0,0),(\frac{1}{2},0,\frac{1}{2}),(\frac{1}{2},\frac{1}{2},0),(\frac{1}{2},\frac{1}{2},\frac{1}{2})
\end{array}\right\}$ 
and 
\begin{equation*}
\bucoin(f',h_2)=\left\{
\begin{array}{l}
\left[\left(0,0,0\right),\left(0,0,\frac{1}{2}\right)\right], \left[\left(0,\frac{1}{2},0\right),\left(0,\frac{1}{2},\frac{1}{2}\right)\right]\\
\left[\left(\frac{1}{2},0,0\right),\left(\frac{1}{2},0,\frac{1}{2}\right)\right], \left[\left(\frac{1}{2},\frac{1}{2},0\right),\left(\frac{1}{2},\frac{1}{2},\frac{1}{2}\right)\right]
\end{array}
\right\}.
\end{equation*}
There are four single Borsuk-Ulam coincidences classes. The pseudo-index shows that all four are essential. Therefore, $\NBU(f,h_2)=4$. 

Now, if $q=0$ we will at first assume $p=0$. Thus, $su=rv$ and $bu=av$. For $u\neq 0$ we have $s=\frac{rv}{u}$ and $b=\frac{av}{u}$. Then,
\begin{align*}
f(x,y,z)=(f \circ h_2)(x,y,z) & \Leftrightarrow 
\left\{\begin{array}{l}
2ax+2\left(\frac{av}{u}\right)y=0\\
2rx+2\left(\frac{rv}{u}\right)y=0\\
2vy=-2ux
\end{array} \right.\\
& \Leftrightarrow
\left\{\begin{array}{l}
2ax-\frac{a}{u}2ux=0 \\
2rx-\frac{r}{u}2ux=0\\
2vy=-2ux
\end{array} \right.\\
& \Leftrightarrow
\left\{\begin{array}{l}
0=0\\
0=0\\
2vy=-2ux
\end{array}. \right.
\end{align*} 

Let $f':\T^3 \to \T^3$ be defined by $f'(x,y,z)=(ax+by+cz+\delta(z),rx+sy+tz+\epsilon(z),ux+vy+wz)$. 
\begin{equation*}
f'(x,y,z)=(f' \circ h_2)(x,y,z)  \Leftrightarrow 
\left\{\begin{array}{l}
\delta(z)=\delta(z+\frac{1}{2})\\
\epsilon(z)=\epsilon(z+\frac{1}{2})\\
2vy=-2ux
\end{array} \right.\\
\Leftrightarrow
\left\{\begin{array}{l}
z=\frac{1}{4},\frac{3}{4}\\
z=0,\frac{1}{2}\\
2vy=-2ux
\end{array}. \right.
\end{equation*} 
So, the system has no solution in $\T^3$, that is, $\coin(f',f'\circ h_2)=\emptyset$. Therefore, $\NBU(f,h_2)=0$.

For $u=0$ we have $rv=0$ and $av=0$. Since $(a,r,u)\neq(0,0,0)$ we have $v=0$. Thus,
\begin{equation*}
f(x,y,z)=(f \circ h_2)(x,y,z)  \Leftrightarrow 
\left\{\begin{array}{l}
2ax+2by=0\\
2rx+2sy=0\\
0=0
\end{array}. \right.
\end{equation*} 
Let $f':\T^3 \to \T^3$ be defined by $f'(x,y,z)=(ax+by+cz,rx+sy+tz,wz+\epsilon(x))$. 
\begin{equation}
f'(x,y,z)=(f' \circ h_2)(x,y,z) \Leftrightarrow 
\left\{\begin{array}{l}
2ax+2by=0\\
2rx+2sy=0\\
\epsilon(x)=\epsilon(-x)
\end{array} \right.
\Leftrightarrow \label{sistemabs} 
\left\{\begin{array}{l}
2by=0\\
2sy=0\\
x=0,\frac{1}{2}
\end{array}. \right.
\end{equation}

The case $(b,s,v)=(0,0,0)$ has already been analyzed at the beginning of this section. So $(b,s)\neq(0,0)$, since $v=0$. Without loss of generality, assume $b\neq 0$. 

Consider $(x,y,z) \in \T^3$ with $y=\frac{i}{2b}$ satisfying the first two equations of \eqref{sistemabs} with $i \in \{0,1,2,\ldots,2b-1\}$. Note that $y=0$ and $y=\frac{1}{2}$ are examples of $y=\frac{i}{2b}$ satisfying the desired equations (for $i=0$ and $i=b$, respectively). Thus,
\begin{equation*}
f'(x,y,z)=(f' \circ h_2)(x,y,z) \Leftrightarrow 
\left\{\begin{array}{l}
y=\frac{i}{2b}\\
0=0\\
x=0,\frac{1}{2}
\end{array}. \right.
\end{equation*}
Let $f'':\T^3 \to \T^3$ be given by 
$$f''(x,y,z)=(ax+by+cz+\epsilon(z),rx+sy+tz+\epsilon(y),wz+\epsilon(x)).$$ Then, $f''$ is homotopic to $f$ the
\begin{align*}
f''(x,y,z)=(f'' \circ h_2)(x,y,z) & \Leftrightarrow 
\left\{\begin{array}{l}
y+\epsilon(z)=\frac{i_\theta}{2b}+\epsilon(z+\frac{1}{2})\\
y=0,\frac{1}{2}\\
x=0,\frac{1}{2}
\end{array} \right.\\ 
 & \Leftrightarrow
\left\{\begin{array}{l}
z=0,\frac{1}{2}\\
y=0,\frac{1}{2}\\
x=0,\frac{1}{2}
\end{array} \right.
\end{align*}
Therefore, 
\begin{equation*}
\bucoin(f'',h_2)=\left\{
\begin{array}{l}
\left[\left(0,0,0\right),\left(0,0,\frac{1}{2}\right)\right], \left[\left(0,\frac{1}{2},0\right),\left(0,\frac{1}{2},\frac{1}{2}\right)\right]\\
\left[\left(\frac{1}{2},0,0\right),\left(\frac{1}{2},0,\frac{1}{2}\right)\right], \left[\left(\frac{1}{2},\frac{1}{2},0\right),\left(\frac{1}{2},\frac{1}{2},\frac{1}{2}\right)\right]
\end{array}
\right\} 
\end{equation*}
with all four Borsuk-Ulam coincidence classes essential. Therefore, \[\NBU(f,h_2)=4.\]
If $q=0$ and $p\neq 0$, we have $bu=av$ and $su \neq rv$. If $u \neq 0$ then $b=\frac{av}{u}$. That way,
\begin{equation*}
f(x,y,z)=(f \circ h_2)(x,y,z)  \Leftrightarrow 
\left\{\begin{array}{l}
2ax+2\left(\frac{av}{u}\right)y=0\\
2rx+2sy=0\\
2vy=-2ux
\end{array} \right. \Leftrightarrow 
\left\{\begin{array}{l}
0=0\\
2rx+2sy=0\\
2vy=-2ux
\end{array}. \right.
\end{equation*} 
Consider $f':\T^3 \to \T^3$ given by $$f'(x,y,z)=(ax+by+cz+\epsilon(x),rx+sy+tz,ux+vy+wz).$$ 
\begin{equation*}
f'(x,y,z)=(f' \circ h_2)(x,y,z)  \Leftrightarrow 
\left\{\begin{array}{l}
\epsilon(x)=\epsilon(-x)\\
2rx+2sy=0\\
2vy=-2ux
\end{array} \right. \Leftrightarrow 
\left\{\begin{array}{l}
x=0,\frac{1}{2}\\
2sy=0\\
2vy=0
\end{array}. \right.
\end{equation*} 

We know $(s,v) \neq (0,0)$ since $p\neq 0$. Moreover, $y=0$ and $y=\frac{1}{2}$ satisfy the above system. Using the same technique applied to the system \eqref{sistemabs}, for $f'':\T^3 \to \T^3$ defined by $f''(x,y,z)=(ax+by+cz+\epsilon(x),rx+sy+tz+\epsilon(z),ux+vy+wz+\epsilon(y))$, we conclude that $\NBU(f,h_2)=4$.

If $u=0$ we have $av=0$. From the beginning $a\neq 0$, so $v=0$, which contradicts the fact that $p\neq 0$. Therefore, $u \neq 0$.

In the second subcase, the set $\{c,t,w\}$ has an odd integer and two even numbers. Without loss of generality, suppose $c$ odd. So,
\begin{equation}\label{casoumimpar}
f(x,y,z)=(f \circ h_2)(x,y,z) \Leftrightarrow 
\left\{\begin{array}{l}
2ax+2by=k_1+\frac{1}{2}\\
2rx+2sy=k_2\\
2ux+2vy=k_3
\end{array}, \right.
\end{equation} 
with $k_1$, $k_2$, $k_3 \in \Z$. We will find possible system solutions \eqref{casoumimpar} in $\R^3$ and then project them in $\T^3$. Studying the system \eqref{casoumimpar}, from the first and third equations,  we have that $(x,y,z) \in \R^3$ with
\begin{equation*}
x=\frac{2qk_1+q-2abk_3+2buk_1+bu}{4qa} \qquad \text{and} \qquad y=\frac{2ak_3-2uk_1-u}{4q}
\end{equation*}
being possible solutions, for $q\neq 0$. 

Projecting these possible solutions in $\T^3$, we have
\begin{equation*}
\coin(f,f\circ h_2)\subseteq S=\{(x,y,z) \in \T^3\ |\  4qax=0 \quad \text{and} \quad 4qy=0\},
\end{equation*}
since $(2qk_1+q-2abk_3+2buk_1+bu) \in \Z$ and $(2ak_3-2uk_1-u) \in \Z$. 

Consider $(x,y,z) \in S \cap \coin(f,f\circ h_2)$, then 
\begin{equation*}
f(x,y,z)=(f \circ h_2)(x,y,z) \Leftrightarrow 
\left\{\begin{array}{l}
x=\frac{i}{4qa}\\
y=\frac{j}{4q}\\
0=0
\end{array}, \right.
\end{equation*}
where $i \in \{0,1,2,\ldots,4qa-1\}$ and $j \in \{0,1,\ldots,4q-1\}$.

Define $f':\T^3 \to \T^3$ by $f'(x,y,z)=(ax+by+cz+,rx+sy+tz,ux+vy+wz+\epsilon(x))$. 
\begin{equation}\label{1imparf'}
f'(x,y,z)=(f' \circ h_2)(x,y,z) \Leftrightarrow 
\left\{\begin{array}{l}
x=\frac{i}{4qa}\\
y=\frac{j}{4q}\\
x=0,\frac{1}{2}
\end{array}. \right.
\end{equation}
If $(0,y,z)$ and $\left(\frac{1}{2},y,z\right)$ are not in $\coin(f,f\circ h_2)$, then the system \eqref{1imparf'} has no solution. So, $\coin(f',f'\circ h_2)=\emptyset$. Therefore, $\NBU(f,h_2)=0$. If $(0,y,z)$ and $\left(\frac{1}{2},y,z\right)$ are in $\coin(f,f\circ h_2)$, for some $y$ and $z$, so considering $f'':\T^3 \to \T^3$ given by $f''(x,y,z)=(ax+by+cz+\epsilon(y),rx+sy+tz,ux+vy+wz+\epsilon(x))$, then we have that $f''$ is homotopic to $f$ and 
\begin{equation*}
f''(x,y,z)=(f'' \circ h_2)(x,y,z) \Leftrightarrow 
\left\{\begin{array}{l}
\epsilon(y)=\epsilon(-y)\\
y=\frac{j}{4q}\\
x=0,\frac{1}{2}
\end{array} \right. \Leftrightarrow 
\left\{\begin{array}{l}
y=0,\frac{1}{2}\\
y=\frac{j}{4q}\\
x=0,\frac{1}{2}
\end{array}. \right.
\end{equation*}
However $(0,0,z)$, $\left(\frac{1}{2},0,z\right)$, $\left(0,\frac{1}{2},z\right)$, $\left(\frac{1}{2},\frac{1}{2},z\right) \in \T^3$ are not in $\coin(f,f\circ h_2)$, since they do not satisfy the first equation of the system \eqref{casoumimpar}. Therewith, $\frac{j}{4q}\neq 0$ and $\frac{j}{4q}\neq \frac{1}{2}$. Therefore $\coin(f'',f'' \circ h_2)=\emptyset$ and $\NBU(f,h_2)=0$.

For $q=0$ suppose, firstly, that $p=0$. Thus, $su=rv$ and $bu=av$. Considering $u\neq 0$ we have $s=\frac{rv}{u}$ and $b=\frac{av}{u}$. Therewith,
\begin{align*}
f(x,y,z)=(f \circ h_2)(x,y,z) & \Leftrightarrow 
\left\{\begin{array}{l}
2ax+2\left(\frac{av}{u}\right)y=\frac{1}{2}\\
2rx+2\left(\frac{rv}{u}\right)y=0\\
2vy=-2ux
\end{array} \right.\\
& \Leftrightarrow
\left\{\begin{array}{l}
2ax-\frac{a}{u}2ux=\frac{1}{2} \\
2rx-\frac{r}{u}2ux=0\\
2vy=-2ux
\end{array} \right.\\
& \Leftrightarrow
\left\{\begin{array}{l}
0=\frac{1}{2}\\
0=0\\
2vy=-2ux
\end{array}. \right.
\end{align*}
So, $\coin(f,f\circ h_2)=\emptyset$. Therefore, $\NBU(f,h_2)=0$. 

Considering $u=0$ we have $rv=0$ and $av=0$. Since $(a,r,u)\neq(0,0,0)$, it follows that $v=0$. Thus,
\begin{equation*}
f(x,y,z)=(f \circ h_2)(x,y,z)  \Leftrightarrow 
\left\{\begin{array}{l}
2ax+2by=\frac{1}{2}\\
2rx+2sy=0\\
0=0
\end{array}. \right.
\end{equation*} 
Let $f':\T^3 \to \T^3$ be defined by $f'(x,y,z)=(ax+by+cz,rx+sy+tz,wz+\epsilon(x))$. 
\begin{equation}
f'(x,y,z)=(f' \circ h_2)(x,y,z) \Leftrightarrow 
\left\{\begin{array}{l}
2ax+2by=\frac{1}{2}\\
2rx+2sy=0\\
\epsilon(x)=\epsilon(-x)
\end{array} \right.
\Leftrightarrow \label{sistemabsimpar} 
\left\{\begin{array}{l}
2by=\frac{1}{2}\\
2sy=0\\
x=0,\frac{1}{2}
\end{array}. \right.
\end{equation}
If $b=0$ then $\NBU(f,h_2)=0$. For $b\neq 0$ consider $y=\frac{i}{4b}$ satisfying the first two equations of \eqref{sistemabsimpar}, with $i \in \{1,3,5,\ldots,4b-1\}$, note that $y=0$ and $y=\frac{1}{2}$ do not satisfy such condition. Thus,
\begin{equation*}
f'(x,y,z)=(f' \circ h_2)(x,y,z) \Leftrightarrow 
\left\{\begin{array}{l}
y=\frac{i}{4b}\\
0=0\\
x=0,\frac{1}{2}
\end{array}. \right.
\end{equation*}
Let $f'':\T^3 \to \T^3$ be given by $f''(x,y,z)=(ax+by+cz,rx+sy+tz+\epsilon(y),wz+\epsilon(x))$. 
\begin{equation*}
f''(x,y,z)=(f'' \circ h_2)(x,y,z) \Leftrightarrow 
\left\{\begin{array}{l}
y=\frac{i}{4b}\\
\epsilon(y)=\epsilon(-y)\\
x=0,\frac{1}{2}
\end{array} \right. \Leftrightarrow
\left\{\begin{array}{l}
y=\frac{i}{4b}\\
y=0,\frac{1}{2}\\
x=0,\frac{1}{2}
\end{array}. \right.
\end{equation*}
Therefore, $\coin(f'',f'' \circ h_2)=\emptyset$ and $\NBU(f,h_2)=0$.

Now, if $q=0$ and $p \neq 0$ then $bu=av$ and $su \neq rv$. If $u \neq 0$ then $b=\frac{av}{u}$. That way,
\begin{equation*}
f(x,y,z)=(f \circ h_2)(x,y,z)  \Leftrightarrow 
\left\{\begin{array}{l}
2ax+2\left(\frac{av}{u}\right)y=\frac{1}{2}\\
2rx+2sy=0\\
2vy=-2ux
\end{array} \right. \Leftrightarrow 
\left\{\begin{array}{l}
0=\frac{1}{2}\\
2rx+2sy=0\\
2vy=-2ux
\end{array}. \right.
\end{equation*}
Therefore, $\coin(f,f\circ h_2)=\emptyset$ and $\NBU(f,h_2)=0$. 

If $u=0$ we have $av=0$. From the beginning $a\neq 0$, so $v=0$, contradicting the fact that $p\neq 0$. Therefore, $u \neq 0$.

The third subcase, in which the set $\{c,t,w\}$ has two odd numbers and an even number, is similar to the second case, and the Nielsen-Borsuk-Ulam number is also zero.

In the fourth and last subcase, $c,t,w$ are odd. For $q\neq 0$ we conclude, similarly to the previous cases, that $\NBU(f,h_2)=0$. 

If $q=0$, $p=0$ and $u \neq 0$, we have
\begin{align*}
f(x,y,z)=(f \circ h_2)(x,y,z) & \Leftrightarrow 
\left\{\begin{array}{l}
2ax+2\left(\frac{av}{u}\right)y=\frac{1}{2}\\
2rx+2\left(\frac{rv}{u}\right)y=\frac{1}{2}\\
2vy=\frac{1}{2}-2ux
\end{array} \right.\\
& \Leftrightarrow
\left\{\begin{array}{l}
2ax+\frac{a}{2u}-\frac{a}{u}2uy=\frac{1}{2}\\
2rx+\frac{r}{2u}-\frac{r}{u}2uy=\frac{1}{2}\\
2vy=\frac{1}{2}-2ux
\end{array} \right.\\
& \Leftrightarrow
\left\{\begin{array}{l}
\frac{a}{2u}=\frac{1}{2}\\
\frac{r}{2u}=\frac{1}{2}\\
2vy=\frac{1}{2}-2ux
\end{array}. \right.
\end{align*} 

If $\frac{a}{2u}\neq \frac{1}{2}$ or $\frac{r}{2u}\neq \frac{1}{2}$, it follows that $\NBU(f,h_2)=0$, because the system above has no solution in $\T^3$. Otherwise, if $\frac{a}{2u}=\frac{1}{2}$ and $\frac{r}{2u}=\frac{1}{2}$ then $\NBU(f,h_2)=0$, just consider $f':\T^3 \to \T^3$ defined by $f'(x,y,z)=(ax+by+cz+\delta(z),rx+sy+tz+\epsilon(z),ux+vy+wz)$.

If $q=0$, $p=0$ and $u=0$, we have $v=0$. Then,
\begin{equation*}
f(x,y,z)=(f \circ h_2)(x,y,z)  \Leftrightarrow 
\left\{\begin{array}{l}
2ax+2by=\frac{1}{2}\\
2rx+2sy=\frac{1}{2}\\
0=0
\end{array}. \right.
\end{equation*} 
Define $f':\T^3 \to \T^3$ by $f'(x,y,z)=(ax+by+cz,rx+sy+tz,wz+\epsilon(x))$. So, $f'$ is homotopic to $f$ and 
\begin{equation*}
f'(x,y,z)=(f' \circ h_2)(x,y,z)  
\Leftrightarrow 
\left\{\begin{array}{l}
2ax+2by=\frac{1}{2}\\
2rx+2sy=\frac{1}{2}\\
x=0,\frac{1}{2}
\end{array} \right.
\Leftrightarrow 
\left\{\begin{array}{l}
2by=\frac{1}{2}\\
2sy=\frac{1}{2}\\
x=0,\frac{1}{2}
\end{array}. \right.
\end{equation*} 
We know $(b,s)\neq(0,0)$, because $(b,s,v)\neq(0,0,0)$. Clearly the $\NBU(f,h_2)=0$ if $b=0$ or $s=0$. For $b\neq 0$ and $s\neq 0$, we can use the same technique applied to the system~$\eqref{sistemabsimpar}$, and conclude that $\NBU(f,h_2)=0.$

Lastly, for $q=0$ and $p\neq 0$, we have $u \neq 0$, $b=\frac{av}{u}$ and $su\neq rv$. Thus,
\begin{equation*}
f(x,y,z)=(f \circ h_2)(x,y,z) \Leftrightarrow 
\left\{\begin{array}{l}
\frac{a}{2u}=\frac{1}{2}\\
2rx+2sy=\frac{1}{2}\\
2vy=\frac{1}{2}-2ux
\end{array}. \right.
\end{equation*}
If $\frac{a}{2u} \neq \frac{1}{2}$, it follows that $\coin(f,f\circ h_2)=\emptyset$. Otherwise, let $f':\T^3 \to \T^3$ be given by $f'(x,y,z)=(ax+by+cz+\epsilon(x),rx+sy+tz,ux+vy+wz)$. Then,
\begin{equation}\label{sistimpar}
f'(x,y,z)=(f' \circ h_2)(x,y,z) \Leftrightarrow 
\left\{\begin{array}{l}
x=0,\frac{1}{2}\\
2sy=\frac{1}{2}\\
2vy=\frac{1}{2}
\end{array}. \right.
\end{equation}
Note that $(s,v)\neq(0,0)$ (because $p\neq 0$) and  $(x,y,z) \in \T^3$ with $y=0$ or $y=\frac{1}{2}$ are not solutions to the system above. If $s=0$ or $v=0$ then $\coin(f',f' \circ h_2)=\emptyset$. If $s\neq 0$ and $v\neq 0$, consider  $(x,y,z) \in \T^3$ such that $y=\frac{i}{4s}$, satisfies the last two equations of the system \eqref{sistimpar}, where $i \in \{1,3,\ldots,4s-1\}$. So,
\begin{equation*}
f'(x,y,z)=(f' \circ h_2)(x,y,z) \Leftrightarrow 
\left\{\begin{array}{l}
x=0,\frac{1}{2}\\
y=\frac{i}{4s}\\
0=0
\end{array}. \right.
\end{equation*}
For $f'':\T^3 \to \T^3$ defined by $f''(x,y,z)=(ax+by+cz+\epsilon(x),rx+sy+tz,ux+vy+wz+\epsilon(y))$.
\begin{equation}\label{sistimpar2}
f''(x,y,z)=(f'' \circ h_2)(x,y,z) \Leftrightarrow 
\left\{\begin{array}{l}
x=0,\frac{1}{2}\\
y=\frac{i}{4s}\\
y=0,\frac{1}{2}
\end{array}. \right.
\end{equation}
Since $y=0$ and $y=\frac{1}{2}$ do not satisfy the second equation of \eqref{sistimpar2}, it follows that $\coin(f'',f''\circ h_2)=\emptyset$, and $\NBU(f,h_2)=0$.
\end{proof}

\section{Nielsen-Borsuk-Ulam number in $\T^n$, $n>3$}

For the $n-$torus, with $n>3$, there is no classification of free involutions in the literature. That way, the study of the Nielsen-Borsuk-Ulam number in this particular space cannot be made in general. What we can do is consider a free involution $\tau$ and calculate the Borsuk-Ulam number $\NBU(f,\tau)$ for any map $f:\T^n \to \T^n$. 

Consider the following free involutions in $\T^n$: 
\begin{align*}
\tau_1(x_1,x_2,\ldots,x_{n-1},x_n)&=\left(x_1,x_2,\ldots,x_{n-1},x_n+\frac{1}{2}\right)\\
\tau_3(x_1,x_2,\ldots,x_{n-1},x_n)&=\left(x_1,x_2,\ldots,x_{n-2},-x_{n-1},x_n+\frac{1}{2}\right)\\
\tau_4(x_1,x_2,\ldots,x_{n-1},x_n)&=\left(x_1+x_2,-x_2,x_3,\ldots,x_{n-1},x_n+\frac{1}{2}\right).
\end{align*}
Observe that the mentioned involutions are generalizations to the $n-$torus of the involutions $h_1$, $h_3$ and $h_4$ of $3-$torus. Applying the same method used previously in $\T^3$ to these involutions, we can demonstrate that the Nielsen-Borsuk-Ulam number is zero for those involutions, i.e., for any map $f:\T^n \to \T^n$ we have
\[\NBU(f,\tau_1)=0, \qquad \NBU(f,\tau_3)=0 \quad \text{and} \quad \NBU(f,\tau_4)=0.\]

For the free involution 
$$\tau_2(x_1,x_2,\ldots,x_{n-1},x_n)=\left(-x_1,-x_2,\ldots,-x_{n-1},x_n+\frac{1}{2}\right)$$
 in $\T^n$, we have $\NBU(f,\tau_2)\neq 0$ for some map $f:\T^n \to \T^n$. Indeed, let $g:\T^n \to \T^n$ be a map such that
\begin{equation*}
g_{\#}=\begin{pmatrix}
1 & 0 & 0 & \cdots & 0 & 2b_1\\
0 & 1 & 0 & \cdots & 0 & 2b_2\\  
0 & 0 & 1 & \cdots & 0 & 2b_3\\
\vdots & \vdots & \vdots & \ddots & \vdots &  \vdots\\
0 & 0 & 0 & \cdots & 1 & 2b_{n-1}\\
0 & 0 & 0 & \cdots & 0 & 2b_n
\end{pmatrix}
\end{equation*}
where $b_i \in \Z$, i.e., \[g(x_1,\ldots,x_{n-1},x_n)=\left(x_1,\ldots,x_{n-1},2b_1x_1+2b_2x_2+\ldots+2b_nx_n\right).\] 
Let $g':\T^n \to \T^n$ be defined by 
\[g'(x_1,\ldots,x_{n-1},x_n)=\left(x_1,\ldots,x_{n-1},2b_1x_1+2b_2x_2+\ldots+2b_nx_n+\epsilon(x_n)\right),\]
where $\epsilon:\T^1 \to \T^1$ is given by $\epsilon(x)=\frac{1}{n_0}\sen(2\pi x)$, with $n_0 \in \Na$ conveniently chosen.
Note that $g'$ is homotopic to $g$ and 
\[
g'(x_1,\ldots,x_n)=(g' \circ \tau_2)(x_1,\ldots,x_n)  \Leftrightarrow
\left\{ \begin{array}{c}
x_1=0,\frac{1}{2}\\
x_2=0,\frac{1}{2}\\
\vdots\\
x_n=0,\frac{1}{2}
\end{array}. \right.
\]
Then, we have that the cardinality of the coincidence set of pair $(g',g'\circ \tau_2)$ is equal to $2^n$, $\#\coin(g',g'\circ \tau_2)=2^n$, and the cardinality of the Borsuk-Ulam coincidence set of pair $(g',\tau_2)$ is  $2^{n-1}$, $\#\bucoin(g',\tau_2)=2^{n-1}$. Therefore, there exists $2^{n-1}$ essential Borsuk-Ulam coincidence classes. Thus, we can conclude that $\NBU(g,\tau_2)=2^{n-1}$.

The results obtained here for the Nielsen-Borsuk-Ulam number in low dimension $n-$torus, $n=1,2,3$, and the example of the map $g$ in $\T^n$ above, induces the formulation of the following conjecture:

\begin{conjectura}
Let $f:\T^n \to \T^n$ be a map and $\tau$ a free involution in $\T^n$. Then
\[\NBU(f,\tau)=
\left\{
\begin{array}{cl}
2^{n-1} & \text{or }\\
0. &
\end{array}
\right.
\]
\end{conjectura}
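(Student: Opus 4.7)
The plan is to generalize the strategy used for $n = 1, 2, 3$, where a representative $f(x) = Mx + c$ of the given homotopy class is perturbed by a small sinusoidal function to cut the coincidence set down to a finite set of well-understood points. The inputs needed are (i) a workable description of free involutions $\tau$ on $\T^n$ up to homotopy, and (ii) a uniform analysis of the system $f(x) = f(\tau(x))$.

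For (i), I would first show that every free involution $\tau$ on $\T^n$ is homotopic to an affine involution $\tau(x) = Ax + b$ with $A \in GL_n(\Z)$, $A^2 = I$, $(A + I)b \in \Z^n$, and the freeness condition that $(A - I)x + b \not\equiv 0 \pmod{\Z^n}$ for all $x$. Since $\T^n$ is a $K(\Z^n, 1)$, the homotopy class of $\tau$ is determined by its action on $\pi_1$, and $\NBU$ is a homotopy invariant, so this reduction is clean even without the topological classification of involutions (which is open for $n \geqslant 4$).

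For (ii), diagonalize $A$ over $\Q$ as $V_+ \oplus V_-$ with $x = x_+ + x_-$. The coincidence equation reduces to $M(A - I)x_- \equiv Mb \pmod{\Z^n}$, a condition only on $x_-$. Either this system has no solution in $\T^n$, in which case $\coin(f, f \circ \tau) = \emptyset$ and $\NBU(f, \tau) = 0$, or the solution locus is a disjoint union of translates of the subtorus corresponding to $V_+$. In the latter case, apply the sinusoidal perturbation $\epsilon(t) = \frac{1}{n_0}\sen(2\pi t)$ to the coordinates along $V_+$, with auxiliary adjustments along $V_-$ as needed, to collapse each sheet to its corner set $\{0, \tfrac{1}{2}\}^{\dim V_+}$. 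A careful count should yield $2^n$ isolated coincidence points paired by $\tau$, producing $2^{n-1}$ single Borsuk-Ulam coincidence classes, each of pseudo-index $\pm 1$ and hence essential.

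The main obstacle is controlling the combinatorics of step (ii). Even for $n = 3$ the proof of Theorem \ref{NBUtoroT3} splits into many subcases driven by the parities of $c, t, w$ and the vanishing of the $2 \times 2$ minors $p, q, o$, and the subcase structure grows combinatorially with $n$. I expect one must replace the case-by-case perturbations by a canonical one and encode the parity and minor conditions as invariants, for instance the Smith normal form of $M(I - A)$ restricted to $V_-$ together with $Mb$ modulo $2\Z^n$. Proving that this invariant cleanly separates $\NBU = 0$ from $\NBU = 2^{n-1}$ uniformly in $n$, and ruling out any intermediate value, is the crux of the conjecture.
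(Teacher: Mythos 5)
This statement is labelled a conjecture and the paper offers no proof of it -- only the evidence of the $n\leqslant 3$ computations and the single example $g$ with $\NBU(g,\tau_2)=2^{n-1}$ -- so there is no argument of the paper to compare yours against; what has to be judged is whether your outline closes the gap, and it does not. The first problem is step (i). You invoke ``$\NBU$ is a homotopy invariant'' to replace $\tau$ by an affine involution, but the invariance established in the theory (Proposition 2.7) is invariance under homotopies of $f$ only. The Borsuk--Ulam coincidence classes, the single/double dichotomy and the pseudo-index are all built from the actual pairing $x\mapsto\tau(x)$, i.e.\ from the free $\Z_2$-action; a homotopy of $\tau$ passes through maps that are not involutions at all, and two homotopic free involutions need not be equivalent as involutions. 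The correct reduction would be to show every free involution on $\T^n$ is \emph{equivalent} (conjugate by a homeomorphism) to an affine one, which is precisely the classification the paper says is unavailable for $n>3$; in higher dimensions this is a genuine rigidity question, not a consequence of $\T^n$ being a $K(\Z^n,1)$.

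The second and larger problem is that step (ii) asserts the conclusion rather than proving it. The claim that after a sinusoidal perturbation ``a careful count should yield $2^n$ isolated coincidence points\ldots each of pseudo-index $\pm1$ and hence essential'' is exactly the dichotomy to be established, and the paper's own $n=3$ analysis shows it is false as a blanket statement: for $h_2$ there are subcases with $\coin(f,f\circ h_2)\neq\emptyset$ in which the perturbation empties the coincidence set entirely ($\NBU=0$), and the answer turns on delicate parity and minor conditions ($c,t,w$ mod $2$, the vanishing of $p$ and $q$, the value of $u$), not only on whether the linear system has a solution. Isolated points of index $\pm1$ also do not force essentiality: a single class in the relevant parity regime carries the sum of indices mod $2$ over possibly several points, and double classes carry the index of one half, so cancellation within a class is possible. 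Your suggestion to package the case analysis into the Smith normal form of $M(I-A)|_{V_-}$ together with $Mb$ mod $2$ is a sensible direction, but as you concede it is unexecuted, and ruling out intermediate values of $\NBU$ is the entire content of the conjecture. As written, the proposal is a research plan, not a proof.
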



\begin{thebibliography}{10}

\bibitem{daciclaude}
A. Bauval, D. L. Gon\c{c}alves, and C. Hayat,
\textit{The {B}orsuk-{U}lam theorem for the {S}eifert manifolds having flat
  geometry.} {https://arxiv.org/pdf/1807.00159.pdf}, 2018.

\bibitem{daniel}
F. S. Cotrim and D. Vendr\'{u}scolo.
\textit{ Nielsen coincidence theory applied to {B}orsuk-{U}lam geometric
  problems.}  Topology Appl., \textbf{159(18)} (2012), 3738--3745.

\bibitem{daniel2}
F. S. Cotrim and D. Vendr\'{u}scolo.
\textit{The {N}ielsen {B}orsuk-{U}lam number.}
 Bull. Belg. Math. Soc. Simon Stevin,\textbf{24(4)} (2017), 613--619.

\bibitem{dacibergsurfaces}
D. L. Gon\c{c}alves.
\textit{The {B}orsuk-{U}lam theorem for surfaces.}
Quaest. Math., \textbf{29(1)} (2006), 117--123.

\bibitem{daciberg}
D. L. Gon\c{c}alves and J. Guaschi.
\textit{The {B}orsuk-{U}lam theorem for maps into a surface.}
Topology Appl., \textbf{157(10-11)} (2010), 1742--1759.

\bibitem{vinicius}
D. L. Gon\c{c}alves, J. Guaschi, and V. C. Laass.
\textit{The {B}orsuk-{U}lam property for homotopy classes of self-maps of
  surfaces of {E}uler characteristic zero.}
J. Fixed Point Theory Appl., \textbf{21(2)} (2019) Art. 65.

\bibitem{Hempel}
J. Hempel.
\textit{Free cyclic actions on {$S^{1}\times S^{1}\times S^{1}$}.}
Proc. Amer. Math. Soc., \textbf{48} (1975), 221--227.

\bibitem{JezierskiMarzantowicz}
J. Jezierski and W. Marzantowicz.
\textit{ Homotopy methods in topological fixed and periodic points
  theory}, volume~3 of {\em Topological Fixed Point Theory and Its
  Applications}. Springer, Dordrecht, 2006.

\bibitem{tesevinicius}
V. C. Laass.
\textit{A propriedade de Borsuk-Ulam para fun\c c\~oes entre
  superf\'icies}. PhD thesis, Instituto de Matem\'atica e Estat\'istica da Universidade de S\~ao Paulo, 2015.

\bibitem{UsingBorsukUlam}
J. Matou\v{s}ek.
\textit{Using the {B}orsuk-{U}lam theorem}. Universitext. Springer-Verlag, Berlin, 2003. 

\end{thebibliography}
\end{document}